\documentclass[twoside,12pt]{amsart}
\usepackage{amsfonts}
\usepackage{amsmath}
\usepackage{amssymb}
\usepackage{latexsym}
\usepackage{accents}
 \usepackage{indentfirst}
 \usepackage{color}
 \setlength{\parindent}{2em}
\textwidth=16.5cm
\oddsidemargin=0cm
\evensidemargin=0cm
\textheight=22.5cm
\topmargin=0.2cm

\makeatletter
\def\serieslogo@{}
\makeatother
\makeatletter
\def\@setcopyright{}
\makeatother

\newtheorem{thm}{Theorem}[section]
\newtheorem{cor}[thm]{Corollary}
\newtheorem{lem}[thm]{Lemma}
\newtheorem{prop}[thm]{Proposition}
\newtheorem{assum}[thm]{Assumption}
\newtheorem*{pos1}{Postulate I}
\newtheorem*{pos2}{Postulate I'}
\theoremstyle{definition}

\theoremstyle{remark}
\newtheorem{rem}{Remark}[section]

\newcommand{\thmref}[1]{Theorem~\ref{#1}}

\newcommand{\lemref}[1]{Lemma~\ref{#1}}

\newcommand{\propref}[1]{Proposition~\ref{#1}}

\newcommand{\bb}[1]{\mathbf{#1}}
\newcommand{\bbtau}{{\boldsymbol \tau}}
\newcommand{\bbzeta}{{\boldsymbol \zeta}}


\newcommand{\e}{\epsilon}

\newcommand{\setstyle}[1]{{\mathbb #1}}

\def\setR {\setstyle{R}}


\begin{document}

\title{A Generalized hydrodynamics and its classical hydrodynamic limit}

\author{Zaibao Yang, Wen-An Yong, Yi Zhu}

\address{Zhou Pei-Yuan Center for Applied Mathematics, Tsinghua University,
Beijing 100084, China}


\keywords{Extended irreversible thermodynamics, Conservation-dissipation formalism, Hydrodynamic Limit, Hyperbolic relaxation}


\begin{abstract}
This work is concerned with our recently developed formalism of non-equilibrium thermodynamics. This formalism extends the classical irreversible thermodynamics which leads to classical thermodynamics and can not describe physical phenomena with long relaxations. With the extended theory, we obtain a generalized hydrodynamic system which can be used to describe the non-Newtonian fluids, heat transfer beyond Fourier's law and fluid flow in very short temporal and spatial scales. We study the mathematical structure of the generalized hydrodynamic system and show that it possesses a nice conservation-dissipation structure and therefore is symmetrizable hyperbolic. Moreover, we rigorously justify that this generalized hydrodynamic system tends to the classical hydrodynamics when the relaxation times approach to zero. This shows that the classical hydrodynamics can be derived as an approximation of our generalized one.
\end{abstract}
\maketitle
\markboth{}{Classical hydrodynamic limit}


%


\section{Introduction}

Consider a one-component fluid system where relativistic and external effects are not taken into account. Under the continuum hypothesis, the system evolves according to the conservation laws of mass, momentum and energy \cite{LaLi}:
\begin{equation}\label{11}
\begin{array}{l}
\partial_t\rho+ \nabla\cdot(\rho \bb v)  = 0, \\[0.5ex]
\partial_t(\rho  \bb v) + \nabla\cdot(\rho \bb v\otimes \bb v + \bb P)=0,\\[0.5ex]
\partial_t(\rho e) + \nabla\cdot(\bb v\rho e+\bb q+\bb P\cdot\bb v)  = 0.
\end{array}
\end{equation}
Here $\rho=\rho(\bb x, t)$ is the fluid density with $(\bb x,t)\in \mathbb{R}^d\times(0,+\infty)$ ($d =1, 2, 3$), $\bb v=\bb v(\bb x, t)\in \setR^d$ is the velocity, $e=e(\bb x, t)$ is the specific energy, $\bb q\in \setR^d$ is the heat flux, and $\bb P\in \mathbb{M}_s^{d}$ is the stress tensor. Hereafter, we denote by $\partial_t$ the partial derivative with respect to the time variable $t$, $\nabla=(\partial x_1,~\partial x_2,\cdots,~\partial x_d)^T$ is the gradient operator with respect to the spatial variable $\bb x$, the dot ``$\cdot$" is the usual tensor contraction, the notation ``$\otimes$" denotes the tensor product, and $\mathbb{M}_s^{d}\cong \mathbb{R}^{d(d+1)/2}$ stands for the $d\times d$ symmetric matrix space with the inner product defined as $\bb A:\bb B=\text{tr}(\bb A^T\bb B)$ where $\text{tr}(\cdot)$ is the trace of a matrix. In \eqref{11}, there are $(d + 2)$ equations for the $(2 + 2d + d(d+1)/2)$ unknowns $\rho, \bb v, e, \bb q$ and $\bb P$. Clearly, the system of equations in \eqref{11} is not closed.


Traditionally, the above system is closed by using Newton's
law of viscosity and Fourier's law of heat conduction. These two empirical laws, together with the conservation laws above,
form the well-known Navier-Stokes-Fourier equations,
which are also referred to as classical hydrodynamics.
On the other hand, it was noted \cite{GM} that the empirical laws can be derived from the non-equilibrium thermodynamics, developed by Onsager, Prigogine and many others, often referred to as classical irreversible thermodynamics (CIT). Indeed, CIT assumes that thermodynamic fluxes depend linearly on thermodynamic forces. For isotropic systems, a direct application of Curie's principle leads to Newton's law of viscosity and Fourier's law of heat conduction.

Even though the classical hydrodynamics can describe a large class of real fluids and has vast applications in engineering and sciences, it is not adequate for describing physical systems such as fluids with memories and heat transfer at high frequencies.
Such situations are met when the relaxation times of the fluxes or stresses are long as in polymer solutions, suspensions \emph{etc}. \cite{EIT_book}. In order to deal with the non-classical situations,
various extended theories of the non-equilibrium thermodynamics have been developed, such as Extended Irreversible Thermodynamics (EIT), Internal Variables Thermodynamics, Rational Thermodynamics, Rational Extended Thermodynamics (RET) and GENERIC \cite{Many_School,EIT_book,NT_2008,Truesdell_book,Muller_book,GO,BET_book}. Each of these theories has its own successes in some aspects, but none of them has been well recognized as the CIT. For instance,
EIT works for the long relaxation phenomena to a certain extent, but it might not
be adequate for systems far away from equilibrium \cite{EIT_book} and the well-posedness of the resultant governing equations does not seem clear;
GENERIC has wide applications in rheology, but it
involves complicated bracket algebras and the resultant governing equations seem not amenable to existing numerics;
and so on. Moreover, these extended theories have not paid much attention to the corresponding
short-relaxation-time limit which is closely related to the compatibility with the CIT \cite{Many_School}.
Consequently, non-equilibrium thermodynamics is not an established edifice, but a work in progress with many different approaches \cite{NT_2008}.


In our previous work \cite{CDF}, we developed a conservation-dissipation formalism (CDF) of non-equilibrium thermodynamics. This theory was inspired by both the EIT \cite{EIT_book} and the structural stability conditions proposed in \cite{Yong92,Yong_book,Yong_ARMA_04,Yong_08} for hyperbolic relaxation systems.
It adopts the advantages of the (at least) three popular schools: EIT, RET and GENERIC.
The structural conditions can be reviewed as stability criteria for non-equilibrium thermodynamics and ensure that non-equilibrium states tend to equilibrium in long time.
The governing equations obtained by this theory have a unified elegant form, are globally hyperbolic and allow a convenient definition of weak solutions.


The purpose of this paper is
to demonstrate the compatibility of the generalized hydrodynamics derived via our CDF with the classical hydrodynamics in the short relaxation-time limit. To do this, we need to introduce proper scalings. Then
we will prove that, as the relaxation time tends to zero, smooth solutions to the generalized hydrodynamics exist in the time interval where the Navier-Stokes-Fourier equations have smooth solutions and
converge to the latter in a proper Sobolev space. Namely, we show that the generalized hydrodynamic system is a hyperbolic approximation to the Navier-Stokes-Fourier equations. Our analysis is guided by the convergence-stability principle \cite{Yong_book} for initial-value problems of symmetrizable hyperbolic systems. It relies crucially on the conservation-dissipation structure and involves a construction of approximate solutions via the Maxwell iteration.


Let us remark that, despite being quite similar in the analysis, the present problem is quite different from those studied in \cite{Yong_ARMA_14,Yong_CPDE,Peng}. Ref. \cite{Yong_ARMA_14} was concerned with an isothermal viscoelastic model with specific linear source terms, while our generalized hydrodynamics is for non-isothermal fluids with general nonlinear source terms. In addition, it is clear that
our diffusive-relaxation problem \eqref{Compact} is not completely included in the classes investigated in \cite{Yong_CPDE,Peng}.
Thus, the present diffusive-relaxation problem requires innovative treatments involving the intrinsic structure of the underlying system.

%


The paper is organized as follow. In Section 2 we give a brief review of the CIT and classical hydrodynamics. The CDF is revisited in Section 3 to obtain the generalized hydrodynamics. In Section 4 we present a formal derivation of the classical hydrodynamics from the generalized one with the short relaxation limit as well as our main theorem. The remaining sections are devoted to a detailed proof of the main theorem: the mathematical structure of the generalized hydrodynamics is discussed in Section 5, the convergency-stability principle is reviewed in Section 6, approximate solutions are constructed in Section 7, and the required error estimates are conducted in Section 8.


\section{Classical Hydrodynamics}
\setcounter{equation}{0}

In this section, we present a brief review of classical irreversible thermodynamics (CIT) and the corresponding hydrodynamics \cite{GM}.
CIT starts with a fundamental assumption called local equilibrium hypothesis. It assumes that the infinitesimal system at $(\bb x, t)$ is always at equilibrium (or infinitely close to equilibrium). Under this hypothesis, all equilibrium thermodynamic concepts \cite{Callen} are valid locally. Thus a non-equilibrium system is completely characterized by equilibrium variables.

For the one-component fluid system considered in this work, only the specific volume $\nu=\frac{1}{\rho}$ and the specific internal energy $u=e-\frac{1}{2}|\bb v|^2$ are needed to describe the infinitesimal system. In addition, the following entropy postulate is generally made:
\begin{pos1}\label{Entropy_Stab}
There exists a differentiable function (called the equilibrium specific entropy) $s^{eq}=s^{eq}(\nu, u)$ satisfying
\begin{enumerate}
\item $s^{eq}(\nu,u)$ is a strictly concave function;
\item $s^{eq}(\nu,u)$ is a strictly monotonically increasing function in $u$, i.e.,  $s^{eq}_u>0$;
\item the production rate of $s^{eq}$ is non-negative.
\end{enumerate}
\end{pos1}
\noindent Correspondingly, the total differential of $s^{eq}$ yields the Gibbs relation
\begin{equation}\label{Gibbs_eq}
\hbox{d}s^{eq}=T^{-1}(\hbox{d}u +p\hbox{d}\nu),
\end{equation}
where $T=\left(\frac{\partial s^{eq}}{\partial u}\right)^{-1}$ and $p=T\frac{\partial s^{eq}}{\partial \nu}$ are the equilibrium temperature and pressure, respectively.

\begin{rem}
The concavity property is also called as the stability criterion of (equilibrium) thermodynamic \cite{Callen}. It follows from the additivity of entropy and the second law of thermodynamics. This criterion guarantees that the heat capacity and isothermal compressibility are positive. Here the arguments $\nu$ and $u$ are required to be the specific values of extensive variables. The monotonicity property is equivalent to that the absolute temperature is positive, i.e., $T>0$. The non-negativity of the entropy production rate reflects the irreversibility of the macroscopic systems.
\end{rem}

With the above postulate and the Gibbs relation \eqref{Gibbs_eq}, the balance equation of the entropy can be easily derived \cite{GM}:
\begin{eqnarray*}
\partial_t(\rho s^{eq})+\nabla\cdot(\rho\bb v s^{eq})+\nabla\cdot(T^{-1}\bb q)=\sigma^{eq},
\end{eqnarray*}
where the entropy production rate is
\begin{equation*}
\sigma^{eq}=\bb q\cdot\nabla T^{-1}-T^{-1}\bbtau:\frac{1}{2}(\nabla \bb v+\nabla \bb v^T).
\end{equation*}
Here $\bbtau=\bb P-p\bb I_d$ is the viscosity stress with $\bb I_d$ the identity matrix of order $d$.

Observe that the production $\sigma^{eq}$ can be rewritten as
\begin{equation*}
\sigma^{eq}=\bb Y^{eq}\cdot \bb X^{eq}=\left(\begin{array}{l}\bb q\\T^{-1}\bbtau\end{array}\right)\cdot \left(\begin{array}{l}\nabla T^{-1}\\-\frac{1}{2}(\nabla \bb v+\nabla \bb v^T)\end{array}\right).
\end{equation*}
Hereafter, for convenience, a matrix in $\mathbb{M}_s^{d}$ is often regarded as to a vector in $\mathbb{R}^{d(d+1)/2}$ when writing in a compact form. The components of the first factor $\bb Y^{eq}=(\bb q,T^{-1}\bbtau)^T$ are fluxes of the conserved variables. They are called as thermodynamic fluxes. The other factor $\bb X^{eq}=(\nabla T^{-1},-\frac{1}{2}(\nabla \bb v+\nabla \bb v^T)^T$ consists of spatial gradients of certain intensive state variables. This factor describes the non-uniformities of the system. In CIT, these non-uniformities are treated as driven forces for systems from non-equilibrium to equilibrium. They are called thermodynamic forces. When the system is at equilibrium, both the thermodynamic forces and the entropy production rate vanish.

Thanks to the causality, the thermodynamic fluxes vanish at equilibrium as well. Thus, we have
the following relation
\begin{equation}\label{22}
\bb Y^{eq}=\bb K\cdot \bb X^{eq},
\end{equation}
where $\bb K$ is called dissipation matrix.
In CIT, only linear regimes are considered. Namely, $\bb K=\bb K(\nu,u)$ does not explicitly depend on the forces, i.e., the spatial derivatives. For an isotropic fluid system, applying Curie's principle \cite{GM}
shows that the dissipation matrix is determined by
\begin{eqnarray}\label{FNS}
\bb K\cdot\left(\begin{array}{c}\nabla T^{-1}\\-\frac{1}{2}(\nabla \bb v+\nabla \bb v^T)\end{array}\right)=\left(\begin{array}{c}-\lambda \nabla T\\ \bb -T^{-1}\bb D[\bb v]\end{array}\right)
\end{eqnarray}
with
\[
\bb D[\bb v]=\xi[\frac{1}{2}(\nabla \bb v+\nabla \bb v^T)-\frac{1}{d}\nabla \cdot \bb v \bb I_d]+\kappa\nabla \cdot \bb v \bb I_d.
\]
Here positive constants $\lambda, \xi$ and $\kappa$ are referred to as heat conduction, shear viscosity, and bulk viscosity coefficients, respectively. We will denote this specific dissipation matrix by $\bb K^{FNS}$
(Fourier-Newton-Stokes matrix).


With constitutive equations \eqref{22} and \eqref{FNS}, the conservation laws \eqref{11} form a closed system of PDEs:
 \begin{equation}\label{NSEq}
\begin{array}{l}
\partial_t\rho+ \nabla\cdot(\rho \bb v)  = 0, \\[0.5ex]
\partial_t(\rho  \bb v) + \nabla\cdot(\rho \bb v\otimes \bb v + pI)-\nabla\cdot \bb D[\bb v]=0,\\[0.5ex]
\partial_t(\rho e) + \nabla\cdot(\bb v\rho e+p\bb v)-\nabla\cdot (\bb D[\bb v]\cdot \bb v+\lambda\nabla T)  = 0.
\end{array}
\end{equation}
Here pressure $p=p(\rho,\rho\bb v, \rho e)$ and temperature $T=T(\rho,\rho\bb v,\rho e)$ are defined by using the Gibbs relation \eqref{Gibbs_eq} (Recall that $u = e - |\bb v|^2/2$). This is the well-known system of Navier-Stokes equations and constitutes the fundamentals of classical hydrodynamics. It describes non-isothermal compressible Newtonian fluid flows and has been widely used in the last two centuries.

Remark that, in CIT, the thermodynamic forces reflect the non-uniformity of the system (for instance, $\nabla T$) and the dissipative fluxes are linearly proportional to the thermodynamic forces (for instance, $\bb q=-\lambda \nabla T$). These linear algebraic constitutive relations guarantee a positive entropy production rate and thereby the second law of thermodynamics.
On the other hand, CIT assumes that the thermodynamic fluxes respond to the thermodynamic forces instantaneously. In other words, the relaxation times of the fluxes induced by the forces are negligible for the evolution of the macroscopic state variables. This instantaneity assumption implies that the scope of application of the classical hydrodynamics is limited. Consequently, extending CIT becomes urgent to study the phenomena with long relaxations.
This explains why there are so many schools of non-equilibrium thermodynamics \cite{Many_School}.

\section{Conservation-Dissipation Formalism}
\setcounter{equation}{0}

In this section, we
introduce our recently developed conservation-dissipation formalism (CDF) of non-equilibrium thermodynamics \cite{CDF} and a generalized hydrodynamics derived via the CDF.

In order to describe the long relaxation phenomena, modern non-equilibrium thermodynamics usually enlarge the state space by adding more state variables \cite{EIT_book}. How to choose suitable extra variables is generally not clear. In CDF, the extra variables are not specified at the beginning. The choice depends on the physical and mathematical structures of the resultant systems. In addition to equilibrium variables $(\nu,u)$, we also need extra non-equilibrium variables denoted by $\bb z \in \mathbb{R}^n$. Thus, the thermodynamic state space is enlarged to an open set $\mathbb{G}\subset\left\{(\nu, u, \bb z)\in \mathbb{R}\times \mathbb{R}\times\mathbb{R}^n: \nu >0\right\}.$

In CDF, the equilibrium thermodynamic postulate (Postulate I) was extended to the enlarged state space.
\begin{pos2}
There exists a smooth function (called the non-equilibrium specific entropy) $s=s(\nu, u, \bb z)$ satisfying
\begin{enumerate}
\item $s(\nu,u,\bb z)$ is a strictly concave function in $\mathbb{G}$;
\item $s(\nu,u,\bb z)$ is a strictly monotonically increasing function in $u$, i.e.,  $s_u>0$;
\item the production rate of $s(\nu, u, \bb z)$ is $s_\bb z\cdot \bb M \cdot s_\bb z$ with
 $\bb M=\bb M(\nu, u,\bb z)$ being positive definite in $\mathbb{G}$.
\end{enumerate}
\end{pos2}

Accordingly, we define conjugate variables as
\[
\theta^{-1}=s_u, \qquad \pi=\theta s_\nu,\quad \bbzeta=s_\bb z.
\]
Thus, we have
\begin{equation}
ds=\theta^{-1}(du+\pi d\nu)+\bbzeta\cdot d\bb z.
\end{equation}
In order to be compatible with the classical theory, $\theta$ is called non-equilibrium temperature and $\pi$ is called non-equilibrium pressure. In addition, $\bbzeta$ is referred to as dissipative entropic variable vanishing at equilibrium.

In Postulate I', the concavity and monotonicity are the same as in Postulate I for equilibrium thermodynamics. The entropy production rate is specified as a quadratic form of the dissipative entropic variable. Namely, the dissipative entropy variable $s_\bb z$ is the thermodynamic force which drives the non-equilibrium system to equilibrium. Obviously, the system is at equilibrium, i.e., the entropy production rate is zero if and only if the driven force is zero, i.e., $s_\bb z=0$ since $\bb M$ is positive definite. In other words, the equilibrium state space is defined as following
$$\mathbb{G}^{eq}=\{(\nu, u, \bb z) \in \mathbb{G}: s_\bb z=0\}.$$

Note that the concavity suggests the extra non-equilibrium variable $\bb z$ behaves like specific values of extensive variables as $\nu$ and $u$. The Reynolds transport theorem indicates that we should seek a governing equation of the form:
\begin{equation}\label{non-eq}
\partial_t(\rho \bb z)+\nabla\cdot\Phi=g,
\end{equation}
where flux $\Phi$ and source $g$ are to be determined. Under Postulate I', CDF proposed a natural way to associate $\bb z$ with the unknown dissipative variables in the conservation laws \eqref{11} and then determines $\Phi$ and $g$ simultaneously. Thus a close systems of evolutionary PDEs is obtained. Next we implement this theory for one-component fluids.

Since the extra unknown variables are $\bb P$ and $\bb q$ for the one-component fluid, the non-equilibrium variables are chosen to be of the same size of the unknown fluxes, i.e., $\bb z=(\bb w, \bb c)\in \mathbb{R}^d\times\mathbb{M}_s^{d}$. In order to specify the flux $\Phi$ and the source $g$ in \eqref{non-eq}, we calculate the balance equation for the entropy to obtain
\begin{align*}
&\partial_t(\rho s)+\nabla\cdot(\bb v \otimes \rho s)\\
=& - \nabla\cdot(\theta^{-1}\bb q)+ s_\bb w\cdot\left[\partial_t(\rho \bb w)+\nabla\cdot(\bb v \otimes \rho \bb w)\right]+\bb q\cdot\nabla \theta^{-1}\\
&+s_\bb c^T:\left[\partial_t(\rho \bb c)+\nabla\cdot(\bb v \otimes \rho \bb c)\right]-\theta^{-1}(\bb P-\pi \bb I)^T:\nabla\bb v.
\end{align*}
With this equation, we refer to \cite{EIT_book} and take $\theta^{-1}\bb q$ as the entropy flux and the rest as the entropy production rate.

Recall that $s=s(\nu, u, \bb w, \bb c)$ is given while $\bb q$ and $\bbtau:=\bb P-\pi \bb I$ are unknown. We choose
$$
\bb q= s_\bb w, \quad \quad ~\bbtau = \theta s_\bb c.
$$
Then the entropy production rate can be written as
$$
\sigma=\bb q\cdot\left[\partial_t(\rho \bb w)+\nabla\cdot(\bb v \otimes \rho \bb w) + \nabla \theta^{-1}\right]\\
+\theta^{-1}\bbtau^T:\left[\partial_t(\rho \bb c)+\nabla\cdot(\bb v \otimes \rho \bb c)-\nabla\bb v\right].
$$
Then a closed system is arisen based on Postulate I' \cite{CDF}. Namely,
\begin{equation}\label{33}
  \left(\begin{array}{c}
\partial_t(\rho\bb w)+\nabla\cdot(\rho\bb v\otimes\bb w)+\nabla\theta^{-1}\\
\partial_t(\rho\bb c)+\nabla\cdot(\rho\bb v\otimes\bb c)-\frac{1}{2}(\nabla \bb v+\nabla \bb v^T)
  \end{array}\right)=\mathbf{M}\cdot  \left(\begin{array}{c}
    \bb q\\
    \theta^{-1}\bbtau
  \end{array}\right)
\end{equation}
with $\mathbf{M}$ positive definite. Consequently, the entropy production rate
$$
\sigma=s_\bb z\cdot \bb M s_\bb z.
$$
It is always positive as long as $s_\bb z$ is not zero. The system reaches to equilibrium when $s_\bb z$ is zero. In this context, the derivative of the entropy with respect to the non-equilibrium variables is regarded as the etropic force which drives the system to equilibrium.

The conservation laws \eqref{11} together with constitutive laws \eqref{33} form a closed system of first-order PDEs. We will see in the later section that it is a symmetrizable hyperbolic system endowed with an entropy. It is also Galilean invariant since $\mathbf{M}$ depends on the velocity $\bb v$ via the local thermodynamic state variable $u$. Further more, it satisfies the Yong stability conditions in \cite{Yong_JDE,Yong_08}. This system is obtained via a non-equilibrium thermodynamic theory beyond local equilibrium hypothesis and is referred to as the generalized hydrodynamic system. If $d=3$, the system is a 13-field equation. We note that to obtain a well-posed 13-field system from Boltzmann equation is also a task of kinetic theory \cite{RuoLi}. However, The traditional moment closure approaches, for example, Grad's approximation, are still struggling for keeping the essential properties of Boltzmann equation like the global hyperoblicity and H-theorem. From another aspect, our macroscopic approximation \eqref{33}  might shed some light on how to construct a mathematically valid moment-closure system.

\section{Main results}
\setcounter{equation}{0}

CDF provides a framework for a thermodynamic description of the macroscopic systems. It has two freedoms--the generalized entropy $s$ and the dissipation matrix $\bb M$. These freedoms are problem-dependent and provide flexibilities in modeling different physical systems. Till now, we only require the concavity and monotonicity of the entropy and positivity of the dissipation matrix. To specify them, other physical considerations are needed. In this section, we consider the compatibility with the classical theory.

When the dissipative entropy variable $\bbzeta$ approaches zero, the system approaches to local equilibrium. In other words, the thermal relations approach to local equilibrium thermal relations and the dissipation feature should be same as that in classical case.  Namely, we have the following compatibility assumption.
\begin{assum}\label{CPT}
At equilibrium manifold $\mathbb{G}^{eq}$, i.e., $s_\bb z=0$, the generalized entropy $s$ and dissipation matrix $\bb M$ are reduced back to the equilibrium forms, i.e.,
$$ s(\nu, u,\bb z)=s^{eq}(\nu, u), \quad \hbox{and} \quad \bb M(\nu,u,\bb z)=(\bb K^{FNS})^{-1}.$$
\end{assum}

Recall that the equilibrium temperature $T$ and pressure $p$ are defined in \eqref{Gibbs_eq}. A direct consequence of this compatibility assumption is that $\theta\to T$ and $\pi\to p$ when the system approaches to equilibrium.


In thermodynamics, the conjugate pairs of thermodynamic forces and fluxes satisfy the so-called thermodynamic causality \cite{GM}. In this work we inherit this causality and assume both the dissipative variables disappear disappear as well when the system reaches to local equilibrium.
\begin{assum}\label{Causal}The dissipative variables vanish at local equilibrium, i.e.,
$$
\bb z=0, \quad \hbox{if} \quad s_\bb z=0.
$$
\end{assum}

A typical choice of such entropy and dissipation matrix is proposed in Appendix and the resulted constitutive equations are also obtained correspondingly.


Up to now, we have not considered the relaxation scales. We assume that the time scales for the relaxations of the dissipative variables $(\bb w, \bb c)$ are $\e_1$ and $\e_2$ respectively. For convenience, we normalized the dissipative variables with their relaxations. Namely, we introduce
$$\tilde{\bb w}=\bb w/\e_1,\qquad \tilde{\bb c}=\bb c/\e_2.$$
Correspondingly, the conjugate variables become
$$\tilde{\bb q}=\e_1\bb q, \quad \text{and} \quad  \tilde{\bbtau}=\e_2 \bbtau.$$
\emph{For the notational convenience, we drop the tildes in the rest of the paper.}

After the rescaling, the generalized hydrodynamic system \eqref{11} and \eqref{33} is of the form
\begin{subequations}\label{GHE}
\begin{eqnarray}
&&\partial_t\rho+ \nabla\cdot(\rho \bb v)  = 0, \\[0.1ex]
&&\partial_t(\rho  \bb v) + \nabla\cdot(\rho \bb v\otimes \bb v +  \pi \bb I_d) +\frac{1}{\e_2}\nabla\cdot\bbtau=0,\\[0.1ex]
&&\partial_t(\rho e) + \nabla\cdot(\bb v\rho e+\pi\bb v)+\frac{1}{\e_1}\nabla\cdot\bb q+\frac{1}{\e_2}\nabla\cdot(\bbtau \cdot\bb v)  = 0,\\[0.1ex]
&&\left(\begin{array}{l}
\partial_t(\rho \bb w) + \nabla\cdot(\rho \bb v\otimes \bb w )+\frac{1}{\e_1}\nabla\theta^{-1}\\
\partial_t(\rho \bb c) + \nabla\cdot(\rho \bb v\otimes \bb c )-\frac{1}{2\e_2}(\nabla \bb v+\nabla \bb v^T)
\end{array}\right)=\left(\begin{array}{ll}\frac{1}{\e_1}&0\\0& \frac{1}{\e_2}\end{array}\right)\mathbf{\overline{M}}\left(\begin{array}{c}
    \frac{1}{\e_1}\bb q\\
     \frac{1}{\e_2}\theta^{-1}\bbtau
  \end{array}\right)\label{wc2}
\end{eqnarray}
\end{subequations}
Here $\overline{\bb M}=\bb M(\nu, u, \e_1\bb w, \e_2\bb c)$.


In many physical processes, the dissipative variables evolve much faster than the macroscopic fluid motions. This is exactly the regime where the classical hydrodynamics is proved to be valid. It is natural to ask  whether the generalized hydrodynamic system \eqref{GHE} has a well-defined limit or this limit is compatible with the classical Fourier-Newton-Stokes equation \eqref{NSEq}.

Rewrite \eqref{wc2} as follows (\emph{for the sake of simplicity, we assume that $\e_1=\e_2=\e$ in this paper}).
\begin{align*}
\left(\begin{array}{c}
\bb q\\
\theta^{-1}\bbtau
  \end{array}\right)=\e \bb M^{-1}(\nu, u, \e\bb w, \e\bb c)\left[\left(\begin{array}{l}\nabla\theta^{-1}\\-\frac{1}{2}(\nabla \bb v+\nabla \bb v^T)\end{array}\right)+\e\left(\begin{array}{l}
\partial_t(\rho \bb w) + \nabla\cdot(\rho \bb v\otimes \bb w )\\
\partial_t(\rho \bb c) + \nabla\cdot(\rho \bb v\otimes \bb c )
\end{array}\right)\right].
\end{align*}

The above formula indicates that  $\left(\bb q,~\theta^{-1}\bbtau\right)=O(\e)$. Then Causality assumption \ref{Causal} implies that $(\bb w, \bb c)=O(\e)$. Using the compatibility of $\bb M$, we obtain that
$$\bb M^{-1}(\nu, u, \e\bb w, \e\bb c)=\bb K^{FNS}+O(\e^2),\quad \theta=T+O(\e^2).$$
Taking an iteration, we immediately get that
\begin{equation}\label{iteration}
\left(\begin{array}{c}
\bb q\\
\theta^{-1}\bbtau
  \end{array}\right)=\e \bb K^{FNS}\left(\begin{array}{l}\nabla T^{-1}\\-\frac{1}{2}(\nabla \bb v+\nabla \bb v^T)\end{array}\right)+O(\e^3)=\e \left(\begin{array}{c}-\lambda \nabla T\\ \bb -T^{-1}\bb D[\bb v]\end{array}\right)+O(\e^3).
\end{equation}
Using $\theta=T+O(\e^2)$ and the definition of $\bb K^{FNS}$, we immediately get that
\begin{eqnarray}
\bb q=-\e\lambda \nabla T+O(\e^3);\\
\bbtau=-\e\bb D[\bb v]+O(\e^3).
\end{eqnarray}
Apparently, the leading terms of the above relations are Fourier's law of heat conduction and Newton-Stokes' law of viscosity.

Notice that $\pi=p+O(\e^2)$ which is directly obtain from Assumptions \ref{CPT} and \ref{Causal}. Substituting \eqref{iteration} into conservation laws in \eqref{GHE}, we have
\begin{equation}
\begin{array}{l}
\partial_t\rho+ \nabla\cdot(\rho \bb v)  = 0, \\[0.5ex]
\partial_t(\rho  \bb v) + \nabla\cdot(\rho \bb v\otimes \bb v + pI)-\nabla\cdot \bb D[\bb v]=O(\e^2),\\[0.5ex]
\partial_t(\rho e) + \nabla\cdot(\bb v\rho e+p\bb v)-\nabla\cdot (\bb D[\bb v]\cdot \bb v+\lambda\nabla T)  = O(\e^2).
\end{array}
\end{equation}
It is seen that the generalized hydrodynamics obtained via CDF behaves the same as the classical hydrodynamics when the relaxation times tend to zero. This compatibility discussion also reveals that the classical Navier-Stokes-Fourier system can be well approximated by a system of first-order PDEs. As a matter of fact, using the idea of CDF, we can find corresponding approximated systems of first-order PDEs for many classical parabolic equations. This lifting from parabolic to hyperbolic is very useful for studying the internal effects like relaxations which parabolic is limited to describe and for designing numerical schemes. It must be pointed out that the above iteration is formal and needs to be rigorously justified.

The main result of this paper is to give a rigorous justification of the above compatibility. Our main theorem is stated as follows.
\begin{thm}\label{thm1}
Under the Assumptions \ref{CPT} and \ref{Causal}, suppose the density $\rho$, velocity $\mathbf{v}$ and energy $e$ of the classical hydrodynamic system \eqref{NSEq} are continuous and bounded in $(x,t)\in\Omega\times[0,t_*]$ with $t_*<\infty$, and satisfy $\displaystyle{\inf_{x,t}\rho(x,t)}>0$ and
$$
\rho, \mathbf{v}, e \in C([0,t_*],H^{s+3})\cap C'([0,t_*],H^{s+1}(\Omega)).
$$
with integer $s\geq [d/2]+2$. Then there exist positive numbers $\e_0=\e_0(t_*)$ and $K=K(t_*)$ such that for $\e\leq \e_0$ the generalized hydrodynamic system \eqref{GHE}, with initial data
in $H^s(\Omega)$ satisfying $\|(\rho^\e,\rho^\e\bb v^\e,\rho^\e e^\e)|_{t=0}-(\rho,\rho\bb v,\rho e)|_{t=0}\|_s=O(\e^2)$, has a unique classical solution satisfying
$$(\rho^\e,\rho^\e\bb v^\e,\rho^\e e^\e,\rho^\e \bb w^\e,\rho^\e\bb c^\e)\in C([0,t_*],H^s(\Omega))$$
and
\begin{equation}
\mathop {\sup }\limits_{t \in [0,{t_*}]}\|(\rho^\e,\rho^\e\bb v^\e,\rho^\e e^\e)-(\rho,\rho\bb v,\rho e)\|_s\leq K(t_*)\e^2.
\end{equation}
\end{thm}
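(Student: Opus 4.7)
The plan is to follow the convergence-stability principle invoked by the authors: (i) exploit the symmetrizable hyperbolic structure of \eqref{GHE} (to be established in Section 5) to obtain local-in-time smooth solutions together with an $H^s$ energy framework whose constants depend only on the $L^\infty$-norm of the solution; (ii) construct an approximate solution $U_a^\e=(\rho_a,\bb v_a,e_a,\bb w_a,\bb c_a)$ built from the given Navier--Stokes--Fourier data whose residual in \eqref{GHE} is of order $\e^2$ uniformly on $[0,t_*]$; and (iii) derive a uniform $O(\e^2)$ bound on the difference $U^\e-U_a^\e$ in $H^s$. Combined, these give existence of the solution to \eqref{GHE} on the full interval $[0,t_*]$ for $\e$ sufficiently small, and the desired convergence.

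For step (ii) I would run the formal Maxwell iteration of \eqref{iteration} one further order. Starting from the classical solution $(\rho,\bb v,e)$ of \eqref{NSEq}, I set $\bb w_a,\bb c_a$ to the expressions produced by iterating \eqref{wc2}, using Assumption~\ref{Causal} to kill the $O(1)$ parts of $\bb z$ and Assumption~\ref{CPT} to expand $\overline{\bb M}^{-1}$ around $\bb K^{FNS}$. The regularity $(\rho,\bb v,e)\in C([0,t_*],H^{s+3})\cap C^1([0,t_*],H^{s+1})$ assumed in the theorem is tailored precisely to give the extra spatial and temporal derivatives consumed by the iteration, while still leaving $\bb w_a,\bb c_a\in C([0,t_*],H^s)$. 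By construction, substitution of $U_a^\e$ into \eqref{GHE} leaves a residual $R^\e$ that is $O(\e^2)$ in $H^s$, matching the hypothesis on the initial data.

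Step (iii) is the technical core. Writing $U^\e=U_a^\e+\e^2 V^\e$, the error $V^\e$ satisfies a perturbation of the same symmetrizable hyperbolic system, with a singular damping term of the form $\e^{-2}\bb M(\cdot)V^\e$ on the $(\bb w,\bb c)$-block and a bounded forcing coming from $\e^{-2}R^\e$. Using the Hessian $-s_{UU}$ provided by Postulate~I$'$ as a symmetrizer and differentiating up to $s$ times, a standard energy estimate formally yields
\[
\frac{d}{dt}\|V^\e\|_s^2\le C\bigl(1+\|V^\e\|_s^2\bigr)-\frac{c}{\e^2}\bigl\|(V^\e_{\bb w},V^\e_{\bb c})\bigr\|_s^2,
\]
where the dissipative term is furnished by the positive definiteness of $\bb M$. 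The singular fluxes $\e^{-1}\nabla\cdot\bbtau$, $\e^{-1}\nabla\cdot\bb q$ in the conservation laws and the singular gradients $\e^{-1}\nabla\theta^{-1}$, $\e^{-1}(\nabla\bb v+\nabla\bb v^T)$ in \eqref{wc2} are paired against one another using the conservation-dissipation block structure of \eqref{33}, and the resulting cross terms cancel or are absorbed by the $\e^{-2}$ damping. Gronwall then controls $\|V^\e\|_s$ by an $\e$-independent constant on a maximal interval, and a continuation argument combined with local well-posedness extends this interval to $[0,t_*]$, proving the stated bound.

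The main obstacle is exactly the simultaneous presence of singular fluxes (at order $\e^{-1}$) and singular damping (at order $\e^{-2}$) in the error equation, i.e.\ the diffusive-relaxation scaling that places this problem outside the classes treated in \cite{Yong_CPDE,Peng}. The needed cancellations are not generic; they reflect the intrinsic pairing between the force $s_{\bb z}$ and the flux in \eqref{33}, so the energy identity must be organized so that the conservation equations and the dissipative-variable equations are tested against the conjugate quantities dictated by the entropy $s$. Once the correct test functions (the entropic variables of $V^\e$) are identified and the Maxwell-iterated profile is accurate to $O(\e^2)$, the remaining commutator and nonlinear estimates are by now routine for symmetrizable hyperbolic systems with small initial perturbation.
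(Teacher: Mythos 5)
Your overall architecture---local existence for the symmetrizable hyperbolic system, a Maxwell-iteration profile built from the Navier--Stokes--Fourier solution, and the convergence-stability principle---is exactly the paper's. But your step (iii) papers over the two places where the real work lies, and the energy inequality you display would not come out of a ``standard'' estimate. First, consider the singular convection $\e^{-1}B_j\partial_{x_j}$: after symmetrizing and integrating by parts, the surviving principal term is $\e^{-1}\int E_\alpha^T\,\partial_{x_j}\big(A_0B_j\big)E_\alpha\,dx$, and its $(I,I)$-block pairs $E^I_\alpha$ with itself, so it \emph{cannot} be absorbed by the damping, which acts only on $E^{II}$. The paper removes this obstruction by passing to the normal form \eqref{71}, in which the symmetrizer $A_0$ is block-diagonal \eqref{72} and, crucially, $B_j^{I,I}(V^I,0)=0$ (Proposition \ref{prop e}); combined with $\|V^{II}_\e\|_{s+1}=O(\e)$ this makes $\e^{-1}B_j^{I,I}$ effectively of size $1+\|E^{II}\|_s/\e$, which the dissipation can handle. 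Your appeal to ``cancellations from the conjugate pairing'' does not supply this: the symmetry cancellation only kills the top-order transport term, and the equilibrium-vanishing of the $(I,I)$-block is an additional structural fact you never identify. Second, the commutator $\e^{-2}[H,\partial_x^\alpha]E^{II}$ produces a term of size $\e^{-2}\|E^{II}\|_{|\alpha|-1}\|E^{II}_\alpha\|$---a loss of one derivative carrying the full $\e^{-2}$ weight---which is not dominated by $C(1+\|E\|_s^2)$ plus a fraction of the damping at the same order $|\alpha|$. The paper controls it only after time integration, by an induction on the derivative count $k\le s$ that feeds the dissipation integral $\e^{-2}\int_0^{\hat t}\|E^{II}\|_{k-1}^2\,dt$ from lower orders into order $k$ (inequality \eqref{811}). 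Relatedly, the nonlinear factors come out as $(1+D^{2s+2})$ with $D=\|E\|_s/\e$, so plain Gronwall is insufficient: one needs the nonlinear Gronwall-type inequality of \cite{Yong_JDE} applied to $\Phi$, and the argument closes only because $\Phi(0)=O(\e^2)$ is small.

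A lesser point: your insistence on an $O(\e^2)$ residual in \emph{all} components, obtained by running the Maxwell iteration one order further, is unnecessary and costs extra regularity (time derivatives of the NSF solution enter the corrector). The paper's profile $V^{II}_\e=-\e\,(\lambda\nabla T,\;T^{-1}\bb D[\bb v])$ only achieves $\|R^{II}\|_s=O(\e)$ (Lemma \ref{lem4}), and this suffices because $R^{II}$ is tested against $E^{II}$, which carries the $\e^{-2}$ damping; Cauchy--Schwarz then yields the $C\e^4$ forcing in \eqref{83}. Recognizing that the second-block residual may be one order worse than the first-block residual is part of the same intrinsic diffusive-relaxation structure your sketch gestures at but does not exploit.
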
Here $\Omega=\mathbb R^d$ or a period domain in $\mathbb{R}^d$, and the notations for the Sobolev spaces are standard as those used in \cite{A.Majda1984}.


In the rest of this paper, we will give a detailed proof of this main result.

The outline of the proof is given as follows.
We first investigate the mathematical structures of the generalized hydrodynamic system \eqref{GHE} in Section \ref{Stru}. These structures are crucial for the proof of our main theorem. It turns out that the generalized hydrodynamic system \eqref{GHE} is a symmetrizable hyperbolic system endowed with a strictly entropy. With the hyperbolicity, this system has a time-local existence for the initial value problem. However the existence time may depend on $\epsilon$ and could tend to zero as $\e\to0$. So we adopt the convergence-stability argument, first formulated in \cite{Yong_book}. In order to apply this argument, we need to construct an approximated solution to the generalized hydrodynamic system \eqref{GHE}. We note that it is more convenient to utilize the so-called normal form. So we rewrite the generalized hydrodynamic system \eqref{GHE} to its normal form. We then construct the approximated solution in Section \ref{Approximated}. With the approximate solution, we then give an alternative compatibility Theorem \ref{CompTheo} based on the normal form. Once \thmref{CompTheo} is proved, our main theorem holds immediately. By applying the convergence-stability argument, we only need to estimate the error between the solution to the normal form and the approximated solution in a compact domain. The detailed estimate is given in Section \ref{Estimate}

\section{\label{Stru}Entropy-Dissipation Structure}
\setcounter{equation}{0}

The purpose of this section is to investigate the mathematical structures of the generalized hydrodynamic system  \eqref{GHE}. This is the mathematical foundation of the main compatibility result.

Let $U^{I}=(\rho, \rho \bb v,\rho e)^T$, $U^{II}=(\rho \bb w, \rho \bb c)^T$ and $U=\left(\begin{array}{c}U^{I}\\ U^{II}\end{array}\right)$. The mathematical state space is
\[
\mathbb{O}:=\left\{(\rho, \rho \bb v, \rho e, \rho \bb w, \rho \bb c): (\frac{1}{\rho}, e-\frac{1}{2}|\bb v|^2, \bb w, \bb c) \in \mathbb{G} \right\}.
\]

We define the mathematical entropy function
\begin{equation}\label{51}
 \eta =\eta(U):=-\rho s(\nu, u, \bb w, \bb c)
\end{equation}
for $U\in \mathbb{O}$. Then the following properties of $\eta(U)$ can be obtained directly (see Appendix).
\begin{prop}\label{prop51}
$\eta(U)$ is a strictly convex function in $\mathbb{O}$ with
\begin{equation}\label{etau}
\eta_U=(\eta_\rho, \theta^{-1}\bb v, -\theta^{-1},-\bb q, -\theta^{-1}\bbtau)^T
\end{equation}
and
\begin{equation}\label{pi}
\theta^{-1}\pi=\eta_U\cdot U-\eta.
\end{equation}
\end{prop}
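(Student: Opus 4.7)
The plan is to establish the three assertions of \propref{prop51} by combining a direct chain-rule computation with a lift to extensive-variable entropy.

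For the gradient formula \eqref{etau}, I would apply the chain rule componentwise to $\eta=-\rho s$, treating the arguments of $s$ as functions of $U$ via $\nu=1/\rho$, $\bb v=(\rho\bb v)/\rho$, $e=(\rho e)/\rho$, $u=e-|\bb v|^2/2$, $\bb w=(\rho\bb w)/\rho$, $\bb c=(\rho\bb c)/\rho$. Invoking the constitutive identifications $\bb q=s_{\bb w}$, $\theta^{-1}\bbtau=s_{\bb c}$, and $\theta^{-1}=s_u$ from Section 3, the derivatives with respect to $\rho\bb v$, $\rho e$, $\rho\bb w$, and $\rho\bb c$ collapse in one line each to $\theta^{-1}\bb v$, $-\theta^{-1}$, $-\bb q$, and $-\theta^{-1}\bbtau$, respectively. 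The derivative $\eta_\rho$ is more tedious because $\rho$ enters every argument of $s$, but its explicit form is not needed for what follows.

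For the Euler identity \eqref{pi}, I would verify it by direct substitution. Forming $\eta_U\cdot U-\eta$ and expanding $\rho\eta_\rho$ via the Gibbs relation $ds=\theta^{-1}(du+\pi\,d\nu)+\bbzeta\cdot d\bb z$, the contributions involving $u$, $|\bb v|^2$, $\bb q\cdot\bb w$, and $\theta^{-1}\bbtau:\bb c$ cancel pairwise. The only surviving term is the $\pi\,d\nu$ contribution inside $\eta_\rho$, which after using $\rho\nu=1$ evaluates to exactly $\theta^{-1}\pi$.

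The main obstacle is the strict convexity of $\eta$ on $\mathbb{O}$, because the map $U\mapsto(\nu,u,\bb w,\bb c)$ is nonlinear in two distinct ways—the reciprocal $\nu=1/\rho$ and the quadratic subtraction $|\bb v|^2/2$ inside $u$—so strict concavity of $s$ does not pull back directly. My plan is to lift to the extensive setting by introducing a fictitious volume variable $V>0$ and studying
\[
\mathcal{S}(V,M,P_T,E_T,W_T,C_T)=M\,s\!\left(\tfrac{V}{M},\ \tfrac{E_T}{M}-\tfrac{|P_T|^2}{2M^2},\ \tfrac{W_T}{M},\ \tfrac{C_T}{M}\right),
\]
which is positively $1$-homogeneous and restricts to $-\eta(U)$ on the slice $V=1$. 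For a convex combination of two admissible points with weight $\lambda$, the specific variables $V/M$, $W_T/M$, $C_T/M$ at the combination are exact $\alpha$-weighted means with $\alpha_0=(1-\lambda)M_0/M_\lambda$ and $\alpha_1=\lambda M_1/M_\lambda$, while the classical convexity of $(P_T,M)\mapsto|P_T|^2/(2M)$ gives $u_\lambda\ge\alpha_0 u_0+\alpha_1 u_1$. Combining the monotonicity $s_u>0$ with the strict concavity of $s$ then shows $\mathcal{S}$ is concave; tracking equality cases—which force $P_0/M_0=P_1/M_1$ at the quadratic step and coincidence of all specific variables at the concavity step—and fixing $V=1$ rules out nontrivial equality, yielding the desired strict convexity of $\eta$.
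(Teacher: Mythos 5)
Your proposal is correct. The gradient formula \eqref{etau} and the Euler identity \eqref{pi} are handled essentially as in the paper's Appendix II: a chain-rule computation through the map $U\mapsto(\nu,u,\bb w,\bb c)$ using $\bb q=s_{\bb w}$, $\bbtau=\theta s_{\bb c}$, $\theta^{-1}=s_u$, with the $s_\nu\nu=\theta^{-1}\pi\nu$ term in $\rho\eta_\rho$ the sole survivor after pairwise cancellation. Where you genuinely diverge is the strict convexity. The paper factors the difficulty into two lemmas: Lemma A.2 (a strictly concave $f$ with $f_y\geq 0$ composed with the concave map $u(\bb v,e)=e-\tfrac{1}{2}|\bb v|^2$ stays strictly concave) gives strict concavity of $s$ in the variables $(\nu,\bb v,e,\bb w,\bb c)$, and Lemma A.1 (concavity of $f(\nu,z)$ is equivalent to concavity of $\rho f(1/\rho,Z/\rho)$) then transfers this through the density scaling to $\eta$. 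You instead homogenize once and for all: the positively $1$-homogeneous extensive entropy $\mathcal{S}$ with a fictitious volume absorbs both nonlinearities simultaneously --- the reciprocal $\nu=1/\rho$ via the $\alpha$-weighted-mean identity (which is exactly the same algebra as the paper's proof of Lemma A.1, your $\alpha_1$ playing the role of the paper's $\xi$), and the kinetic subtraction via joint convexity of $(P_T,M)\mapsto|P_T|^2/(2M)$ together with $s_u>0$ (which replaces Lemma A.2). Your route is arguably tighter on one point: the paper states Lemma A.1 only for plain concavity yet invokes it to conclude \emph{strict} convexity of $\eta$, whereas your explicit equality-case tracking (equality at the quadratic step forces $P_{T,0}/M_0=P_{T,1}/M_1$, equality at the concavity step forces all specific variables to coincide, and $V=1$ then pins down $M_0=M_1$) closes that small strictness gap cleanly. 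What the paper's modular decomposition buys in exchange is two short reusable lemmas rather than one global construction; note also that both arguments implicitly use convexity of the state space $\mathbb{G}$ in the convex-combination steps, so neither has an advantage there.
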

Hereafter, the thermodynamic functions, without introducing any confusions, are functions of $U$ taking the vales in $\mathbb{O}$, i.e., $\theta=\theta(U)$, $\pi=\pi(U)$, $\bb q=\bb q(U)$ and $\bbtau=\bbtau(U)$.

In this context, equilibrium states are reached when $\eta_{U^{II}}=0$. As the system approaches to equilibrium, the compatibility assumption \ref{CPT} and casualty assumption \ref{Causal} lead to the following estimates.
\begin{prop}\label{prop_comp}
Under Assumptions \ref{CPT} and \ref{Causal}, if $\eta_{U^{II}}\to 0$, the following estimates hold
\begin{enumerate}
\item $U^{II}=O(|\eta_{U^{II}}|)$;
\item $\eta(U)=\eta(U^I,0)+O(|\eta_{U^{II}}|)$;
\item $M(U; \e)=(\bb K^{FNS})^{-1}+O(\e|\eta_{U^{II}}|)$.
\end{enumerate}
\end{prop}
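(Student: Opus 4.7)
The plan is to establish (1) first and then bootstrap to (2) and (3).

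For (1), I would fix $U^I$ and view $\eta_{U^{II}}$ as a smooth map of $U^{II}$ near the equilibrium fiber value $U^{II}=0$. Two observations make this fiber problem tractable. First, on this fiber $\eta_{U^{II}}=0$ is equivalent to $U^{II}=0$: from \eqref{etau} and the relations $\bb q=s_{\bb w}$, $\bbtau=\theta s_{\bb c}$, we have $\eta_{U^{II}}=(-s_{\bb w},-s_{\bb c})^T$, which vanishes exactly when $\bbzeta=s_{\bb z}=0$; then Assumption~\ref{Causal} forces $\bb z=0$, i.e., $U^{II}=0$. Second, the strict convexity of $\eta$ from Proposition~\ref{prop51} makes the Hessian block $\eta_{U^{II}U^{II}}$ positive definite, hence invertible, at the equilibrium point. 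The inverse function theorem then produces a local $C^1$-diffeomorphism between neighborhoods of the origin in $U^{II}$-space and in $\eta_{U^{II}}$-space whose inverse is Lipschitz, and $|U^{II}|=O(|\eta_{U^{II}}|)$ follows immediately.

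For (2), I would apply the fundamental theorem of calculus along the segment from $(U^I,0)$ to $(U^I,U^{II})$:
\[
\eta(U^I,U^{II})-\eta(U^I,0)=\int_0^1 \eta_{U^{II}}(U^I,tU^{II})\cdot U^{II}\,dt.
\]
Since $\eta_{U^{II}}(U^I,0)=0$ and $\eta$ is $C^2$, the integrand is $O(|U^{II}|^2)$; converting by (1) yields $O(|\eta_{U^{II}}|^2)$, which in particular gives the stated $O(|\eta_{U^{II}}|)$ bound. For (3), Assumption~\ref{CPT} supplies $\bb M(\nu,u,0,0)=(\bb K^{FNS})^{-1}$, so a first-order Taylor expansion in the last two arguments gives
\[
\bb M(\nu,u,\e\bb w,\e\bb c)-(\bb K^{FNS})^{-1}=O\bigl(\e(|\bb w|+|\bb c|)\bigr)=O(\e|U^{II}|),
\]
and (1) turns this into $O(\e|\eta_{U^{II}}|)$, completing the proof.

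The only (mild) technical issue will be ensuring that all of these estimates are uniform on the compact sets of $U^I$-values that will be invoked later in the error analysis, so that the $O$-constants are $\e$-independent. This uniformity will follow automatically from $C^2$-regularity of $\eta$ and uniform positive-definiteness of $\eta_{U^{II}U^{II}}$ on such compact subsets, which allows the inverse function theorem in step (1) to be applied with constants depending only on the compact set.
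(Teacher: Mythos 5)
Your proof is correct, and it supplies exactly the argument the paper leaves implicit: the paper states Proposition \ref{prop_comp} with no proof at all, presenting it as an immediate consequence of Assumptions \ref{CPT} and \ref{Causal} together with the strict convexity of $\eta$, and your inverse-function-theorem plus Taylor-expansion reasoning is the natural way to fill that gap. One point deserves to be made explicit rather than asserted: Assumption \ref{Causal} as literally stated gives only the implication $s_{\bb z}=0\Rightarrow \bb z=0$, whereas centering your step (1) at $U^{II}=0$ requires $\eta_{U^{II}}(U^I,0)=0$, i.e.\ the converse $\bb z=0\Rightarrow s_{\bb z}=0$. This converse does hold locally near the equilibrium manifold: by the nondegeneracy of $\eta_{U^{II}U^{II}}$ (strict convexity) one can solve $s_{\bb z}(\nu,u,\bb z)=0$ for $\bb z$ near an equilibrium point via the inverse function theorem, and causality then pins this solution at $\bb z=0$, so $s_{\bb z}(\nu,u,0)=0$ for all nearby $(\nu,u)$; the paper itself uses the two characterizations of equilibrium ($\eta_{U^{II}}=0$ and $U^{II}=0$) interchangeably, e.g.\ in the proof of Proposition \ref{prop e}, so your claimed equivalence is legitimate but merits this one-line justification. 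Your closing remark on uniformity is also the right one, since the estimates are later invoked for states confined to a compact subset of $\mathbb{O}$, where $C^2$-regularity of $\eta$ and uniform positive definiteness of $\eta_{U^{II}U^{II}}$ make all the $O$-constants independent of $\e$.
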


Using the thermodynamic relation \eqref{etau}, we immediate get the following estimate from \propref{prop_comp}.
\begin{cor}\label{cor_comp}
Under Conditions in Proposition \ref{prop_comp}, $\eta_{U^{II}U^{I}}=O(|\eta_{U^{II}}|).$
Thus the thermodynamic functions have the estimates
 $$\pi=p+O(|\eta_{U^{II}}|^2), \quad \theta=T+O(|\eta_{U^{II}}|^2).$$
\end{cor}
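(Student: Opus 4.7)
The plan is to reduce the corollary to the observation that, on the equilibrium manifold $\{U^{II}=0\}$, the entropy $\eta$ collapses to a function of $U^I$ alone, which forces all mixed derivatives $\eta_{U^{II}U^I}$ to vanish there, and then to perform a Taylor expansion in $U^{II}$ combined with Proposition \ref{prop_comp}(1).

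First, combining Assumption \ref{CPT} with Assumption \ref{Causal}, I observe that $U^{II}=0$ corresponds to $\bb z=0$, where the non-equilibrium entropy reduces to $s^{eq}(\nu,u)$. In $U$-coordinates this reads
\begin{equation*}
\eta(U^I, 0) = -\rho\, s^{eq}(\nu, u),
\end{equation*}
which is a function of $U^I$ alone. Consequently $\eta_{U^{II}}(U^I,0)\equiv 0$ as a function of $U^I$, and a further differentiation in $U^I$ yields $\eta_{U^{II}U^I}(U^I,0)=0$. A first-order Taylor expansion of the smooth map $U^{II}\mapsto \eta_{U^{II}U^I}(U^I, U^{II})$ about $U^{II}=0$, combined with the estimate $U^{II}=O(|\eta_{U^{II}}|)$ from Proposition \ref{prop_comp}(1), then gives the first claim $\eta_{U^{II}U^I}=O(|\eta_{U^{II}}|)$.

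For the estimates on $\theta$ and $\pi$, I would use the thermodynamic identities \eqref{etau} and \eqref{pi}, namely $\theta^{-1}=-\eta_{\rho e}$ and $\theta^{-1}\pi=\eta_U\cdot U-\eta$. Differentiating the first in $U^{II}$ and evaluating at $U^{II}=0$, the flatness identity from the previous paragraph shows that $\eta_{\rho e, U^{II}}(U^I,0)=0$, so $\theta_{U^{II}}(U^I,0)=0$. For $\pi=\theta(\eta_U\cdot U-\eta)$ the product rule gives
\begin{equation*}
\pi_{U^{II}} = \theta_{U^{II}}(\eta_U\cdot U-\eta) + \theta\bigl(\eta_{UU^{II}}\cdot U\bigr);
\end{equation*}
at $U^{II}=0$ the first term vanishes by the preceding computation and the second reduces to $\theta\,\eta_{U^I U^{II}}(U^I,0)\cdot U^I$, which is also zero by the first part of the corollary. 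A second-order Taylor expansion in $U^{II}$ followed once more by Proposition \ref{prop_comp}(1) then delivers $\theta=T+O(|\eta_{U^{II}}|^2)$ and $\pi=p+O(|\eta_{U^{II}}|^2)$.

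The argument is essentially chain-rule bookkeeping built on the flatness identity $\eta_{U^{II}U^I}(U^I,0)=0$. The only subtle point is verifying that the first-order variations in $U^{II}$ of both $\theta$ and the Legendre-type combination $\eta_U\cdot U-\eta$ genuinely cancel before invoking Taylor's theorem, so that the remainders can be upgraded from $O(|\eta_{U^{II}}|)$ to the sharper $O(|\eta_{U^{II}}|^2)$ claimed for $\theta$ and $\pi$.
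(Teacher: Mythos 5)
Your overall architecture---flatness of $\eta_{U^{II}}$ on the slice $\{U^{II}=0\}$, Taylor expansion in $U^{II}$, and Proposition \ref{prop_comp}(1) to convert $O(|U^{II}|)$ into $O(|\eta_{U^{II}}|)$, with the identities \eqref{etau} and \eqref{pi} handling $\theta$ and $\pi$---is exactly what the paper's one-line proof (``using \eqref{etau} \ldots\ immediately from Proposition \ref{prop_comp}'') intends, and your computations for $\theta$ and $\pi$ are essentially correct. One small bookkeeping point: in $\pi_{U^{II}}$ you should keep $\eta_{UU^{II}}\cdot U=\eta_{U^IU^{II}}\cdot U^I+\eta_{U^{II}U^{II}}\cdot U^{II}$; the second term does vanish at the slice, but because $U^{II}=0$ there, not because of the mixed-derivative identity.

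There is, however, a genuine logical gap at the foundation: your justification of the key flatness identity $\eta_{U^{II}}(U^I,0)\equiv 0$ is a non sequitur. From ``$\eta(U^I,0)=-\rho\,s^{eq}(\nu,u)$ is a function of $U^I$ alone'' nothing follows about the transverse gradient: the restriction of \emph{any} smooth $\eta$ to the slice $\{U^{II}=0\}$ is tautologically a function of $U^I$ alone (take $\eta=f(U^I)+a\cdot U^{II}$ with $a\neq 0$: same restriction, but $\eta_{U^{II}}=a\neq 0$ on the slice). Restricting to the slice controls only tangential derivatives---it legitimately yields $s_u(\nu,u,0)=s^{eq}_u$ and $s_\nu(\nu,u,0)=s^{eq}_\nu$, hence $\theta=T$ and $\pi=p$ at $U^{II}=0$, which you need later---but it can never yield $s_{\bb z}(\nu,u,0)=0$. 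Worse, your very use of Assumption \ref{CPT} at $\bb z=0$ is circular: CPT asserts $s=s^{eq}$ only on $\mathbb{G}^{eq}=\{s_{\bb z}=0\}$, so to write $\eta(U^I,0)=-\rho\,s^{eq}$ you must already know $(\nu,u,0)\in\mathbb{G}^{eq}$, i.e.\ precisely the identity $s_{\bb z}(\nu,u,0)=0$ you are trying to prove. The correct route is through Assumption \ref{Causal} read together with strict concavity: for fixed $(\nu,u)$ the map $\bb z\mapsto s_{\bb z}$ is injective (its Jacobian $s_{\bb z\bb z}$ is negative definite), Causality places any zero of $s_{\bb z}$ at $\bb z=0$, and granting---as the paper does tacitly whenever it writes $U(V^I_\e,0)$ in Lemma \ref{lem4} or asserts in Proposition \ref{prop e} that $\eta_{U^{II}}$ and $U^{II}$ vanish together at equilibrium---that an equilibrium exists over each $(\nu,u)$, one concludes $\mathbb{G}^{eq}=\{\bb z=0\}$ and hence $\eta_{U^{II}}(U^I,0)=0$, from which $\eta_{U^{II}U^I}(U^I,0)=0$ follows by tangential differentiation as you say. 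With that single repair, the remainder of your argument goes through as written.
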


Next, we write the generalized hydrodynamic system \eqref{GHE} in the following form,
\begin{equation}\label{Compact}
\partial_tU+ \nabla\cdot F(U)+\dfrac{1}{\e}\nabla \cdot  G(U)=-\dfrac{1}{\e^2}\left( \begin{array}{c}
 0 \\
M^\e(U){\eta _{{U^{II}}}} \\
 \end{array} \right),
\end{equation}
where $M^\e(U)=\bb M(\nu,u,\e \bb w,\e \bb c)$ is the dissipation matrix and
\begin{align}
&\nabla\cdot F(U)=\nabla\cdot \bb v\otimes U+\left(\begin{array}{c}
    0\\
    \nabla\pi\\
    \nabla\cdot\pi \bb v\\
    0\\
    0
    \end{array}\right),\label{funF}\\
&\nabla\cdot G(U)=\left(\begin{array}{c}
    0\\
    \nabla\cdot\bbtau\\
    \nabla\cdot(\bb q+\bbtau \bb v)\\
    \nabla\theta^{-1}\\
    - \frac{1}{2}(\nabla \bb v+\nabla \bb v^T)\end{array}\right)=\left(\begin{array}{c}
    0\\
    -\nabla\cdot\theta\eta_{\rho \bb c}\\
    \nabla\cdot(-\eta_{\rho \bb w}-\theta\eta_{\rho \bb c}\bb v)\\
    \nabla \theta^{-1}\\
    - \frac{1}{2}(\nabla \bb v+\nabla \bb v^T) \label{funG}
    \end{array}\right).
\end{align}
Let $F_j(U)$ and $G_j(U)$ denote the $x_j$-components of the above fluxes, i.e.,  $F(U)=(F_1,F_2,\cdots, F_d)$ and $G(U)=(G_1,G_2,\cdots, G_d)$.

We are now in a position to state the symmetrizable structure of the system.
\begin{prop}\label{Symmetrizable}$\eta_{UU}\cdot F_{jU}$ and $\eta_{UU}\cdot G_{jU}$ are symmetric for $U\in \mathbb{O}$ and $j=1,2,\cdots d$.
\end{prop}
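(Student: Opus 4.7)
The plan is to apply the Godunov--Mock principle: I will exhibit scalar entropy fluxes $\Psi^F_j(U)$ and $\Psi^G_j(U)$ satisfying the compatibility identities
\[
(\Psi^F_j)_U = \eta_U\, F_{jU}, \qquad (\Psi^G_j)_U = \eta_U\, G_{jU}, \qquad j=1,\ldots,d,\; U\in\mathbb{O}.
\]
Once these are in hand, differentiating each identity once more in $U$ and using that the Hessian of $\Psi^F_j$ (respectively $\Psi^G_j$) is symmetric, while the contraction of $\eta_U$ against $F_{jUU}$ (respectively $G_{jUU}$) is symmetric in its two remaining indices (partial derivatives commute), forces the matrices $\eta_{UU} F_{jU}$ and $\eta_{UU} G_{jU}$ to be symmetric, which is exactly the claim.

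The natural candidates, suggested by the CDF entropy balance written out in Section~3, are the convective entropy flux $\Psi^F_j := v_j\,\eta$ and the heat--entropy flux $\Psi^G_j := -\theta^{-1} (\bb q)_j$, where $(\bb q)_j$ denotes the $j$-th component of $\bb q$. Rather than expanding the Jacobians $F_{jU}$ and $G_{jU}$ in coordinates, the cleanest verification route is to observe that each compatibility identity is equivalent to the scalar balance $\partial_t\eta + \partial_{x_j}\Psi_j = 0$ holding along every smooth solution of the reduced first-order system $\partial_t U + \partial_{x_j} F_j(U) = 0$ (respectively $\partial_t U + \partial_{x_j} G_j(U)=0$): substituting the evolution equation yields $\bigl[(\Psi_j)_U - \eta_U F_{jU}\bigr]\cdot\partial_{x_j} U \equiv 0$ for arbitrary values of $(U,\partial_{x_j} U)$, and this pins down the pointwise compatibility relation.

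The $F_j$-balance is a variant of the classical compressible-Euler entropy calculation: the extra fields $\bb w, \bb c$ are passively advected, the Gibbs relation $ds = \theta^{-1}(du + \pi\, d\nu) + \bbzeta\cdot d\bb z$ reduces the entropy balance to the internal-energy balance, and the momentum and energy equations combine to give $\rho(\partial_t + \bb v\cdot\nabla) u = -\pi\,\nabla\cdot\bb v$, which cancels against $\rho\theta^{-1}\pi(\partial_t + \bb v\cdot\nabla)\nu = \theta^{-1}\pi\,\nabla\cdot\bb v$, yielding $\partial_t(\rho s) + \nabla\cdot(\rho\bb v\, s) = 0$. The $G_j$-balance is the key step: substituting $\partial_t\rho = 0$, $\rho\partial_t\bb v = -\nabla\cdot\bbtau$, and $\rho\partial_t u = -\nabla\cdot\bb q - \bbtau\!:\!\nabla\bb v$ (obtained by combining the $G$-momentum and $G$-energy equations), together with $\partial_t(\rho\bb w) = -\nabla\theta^{-1}$ and $\partial_t(\rho\bb c) = \tfrac12(\nabla\bb v + \nabla\bb v^T)$, into the Gibbs relation and invoking the CDF closures $\bb q = s_{\bb w}$, $\bbtau = \theta s_{\bb c}$ and the symmetry of $\bbtau$, the two cross-terms $\pm\theta^{-1}\bbtau\!:\!\nabla\bb v$ cancel and what remains is $\rho\partial_t s = -\theta^{-1}\nabla\cdot\bb q - \bb q\cdot\nabla\theta^{-1} = -\nabla\cdot(\theta^{-1}\bb q)$; translated into $\eta = -\rho s$, this is exactly $\partial_t\eta + \partial_{x_j}\Psi^G_j = 0$.

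The only real obstacle is the bookkeeping in the $G_j$-balance---in particular, correctly identifying the two $\theta^{-1}\bbtau\!:\!\nabla\bb v$ contributions (one from the internal energy, one from $s_{\bb c}\!:\!\partial_t\bb c$) and verifying their exact cancellation, which is precisely what the CDF closures $\bb q = s_{\bb w}$, $\bbtau = \theta s_{\bb c}$ were designed to guarantee. Once this is done, the Godunov--Mock implication finishes the proof without further computation.
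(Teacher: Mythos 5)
Your proposal is correct and follows essentially the same route as the paper: the paper also exhibits the entropy--flux pairs $(\eta, v_j\eta)$ and $(\eta, \theta^{-1}\eta_{\rho \bb w_j})$ (note $\eta_{\rho\bb w}=-\bb q$ by \eqref{etau}, so the second flux is exactly your $-\theta^{-1}q_j$), establishing $\eta_U\cdot F_{jU}=(v_j\eta)_U$ and $\eta_U\cdot G_{jU}=(\theta^{-1}\eta_{\rho\bb w_j})_U$ by the same divergence-form entropy-balance computation in which the $\theta^{-1}\bbtau\!:\!\nabla\bb v$ terms cancel via the closures $\bb q=s_{\bb w}$, $\bbtau=\theta s_{\bb c}$ and the homogeneity identity \eqref{pi}, and then concluding symmetry of $\eta_{UU}F_{jU}$ and $\eta_{UU}G_{jU}$ by differentiating once more in $U$, exactly your Godunov--Mock step. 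Your ``verify along solutions of the split systems'' packaging is the same algebraic identity in $(U,\nabla U)$ that the paper invokes when it says the relation holds for arbitrary $\partial_{x_j}U$.
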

\begin{proof}
A direct calculation shows that
\begin{align*}
\eta_U\cdot (\nabla \cdot F(U))&=\eta_U\cdot(\nabla\cdot(\bb v\otimes \eta))+\theta^{-1}\bb v\cdot\nabla\pi-\theta^{-1}(\nabla\cdot(\pi \bb v))\\
&=\nabla\cdot(\bb v\eta)+(-\eta+\eta_U\cdot U)\nabla\cdot\bb v-\theta^{-1}\pi\nabla\cdot\bb v\\
&=\nabla\cdot(\bb v\eta).
\end{align*}
where we have used \eqref{pi} to get the last equality. Since the above relation holds for arbitrary $\partial_{x_j}U$, we have
\begin{align*}
\eta_{U}\cdot  F_{jU}=(\bb v_j \eta)_U.
\end{align*}
 Taking derivative of both sides of the above equations with respect to $U$, we have
\begin{align}
\eta_{UU}\cdot  F_{jU}+\eta_{U}\cdot  F_{jUU}=(\bb v_j \eta)_{UU}.
\end{align}
Using the symmetry of the matrices $\eta_{U}\cdot  F_{jUU}$ ~and~ $(\bb v_j \eta)_{UU}$, we find that $\eta_{UU} F_{jU}$ is symmetric.

On the other hand, using \eqref{funG} and $\eta_{\rho \bb v}=\theta\bb v$, we immediately get
\begin{align*}
\eta_U\cdot (\nabla \cdot G(U))&=\theta^{-1}\bb v\cdot (-\nabla\cdot\theta \eta_{\rho \bb c})-\theta^{-1}\nabla\cdot(-\eta_{\rho \bb w}-\theta\eta_{\rho \bb c}\bb v)+\eta_{\rho \bb w}\cdot\nabla\theta^{-1}-\frac{1}{2}\eta_{\rho \bb c}:(\nabla \bb v+\nabla \bb v^T)\\
&=\nabla\cdot(\theta^{-1}\eta_{\rho \bb w})+\theta^{-1}\left(\nabla\cdot(\theta\eta_{\rho\bb c}\bb v)-\bb v\cdot \nabla\cdot\theta \eta_{\rho \bb c}-\eta_{\rho \bb c}:\nabla \bb v\right)\\
&=\nabla\cdot(\theta^{-1}\eta_{\rho \bb w}).
\end{align*}
where we have used that $\eta_{\rho \bb c}$ is symmetric and thus $\frac{1}{2}\eta_{\rho \bb c}:(\nabla \bb v+\nabla \bb v^T)=\eta_{\rho \bb c}:\nabla \bb v$.
A similar argument leads to
\begin{align}
\eta_{U}\cdot G_{jU}=(\theta^{-1}\eta_{\rho \bb w_j})_U.
\end{align}
and the conclusion that $\eta_{UU} G_{jU}$ is symmetric.

\end{proof}

With the above preparations, the evolution of the entropy $\eta(U)$ is readily calculated and we reach to the following theorem.
\begin{prop}
The system \eqref{Compact} endowed with the strictly convex entropy $\eta(U)$ in the state space $\mathbb{O}$ is symmetrizable hyperbolic. And the governing equation of the mathematical entropy $\eta(U)$ satisfies
\begin{align}
\partial_t\eta+\nabla\cdot (\bb v \eta)+\frac{1}{\e}\nabla\cdot (\theta^{-1}\eta_{\rho \bb w})=\sigma
\end{align}
where the entropy production
\begin{equation}
\sigma=-\frac{1}{\e^2}\eta_{U^{II}}\cdot M^\e\cdot \eta_{U^{II}}\le0.
\end{equation}
\end{prop}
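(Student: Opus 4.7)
The plan is to obtain both claims as essentially immediate consequences of \propref{prop51} and \propref{Symmetrizable}; no new structural identities are required.

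For the symmetrizable hyperbolicity, I would write \eqref{Compact} in quasilinear form $\partial_t U + \sum_{j=1}^d A_j^\e(U)\, \partial_{x_j} U = S^\e(U)$ with $A_j^\e = F_{jU} + \e^{-1} G_{jU}$, and then take $A_0 := \eta_{UU}$ as the symmetrizer. By \propref{prop51}, the entropy $\eta$ is strictly convex on $\mathbb{O}$, so $A_0$ is symmetric and positive definite. \propref{Symmetrizable} asserts that each of $\eta_{UU} F_{jU}$ and $\eta_{UU} G_{jU}$ is symmetric, whence $A_0 A_j^\e$ is symmetric as well. This is exactly the Friedrichs-symmetrizer criterion for symmetrizable hyperbolicity; the zeroth-order source on the right-hand side is irrelevant to this notion.

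For the entropy balance, I would contract \eqref{Compact} from the left with the row vector $\eta_U^T$. The time derivative produces $\partial_t \eta$. The two flux contractions were essentially carried out already inside the proof of \propref{Symmetrizable}, yielding $\eta_U\cdot \nabla\cdot F(U) = \nabla\cdot(\bb v \eta)$ and $\eta_U\cdot \nabla\cdot G(U) = \nabla\cdot(\theta^{-1}\eta_{\rho\bb w})$, so that with the $\e^{-1}$ prefactor the spatial part assembles into the claimed entropy-flux divergence. For the source, only the $U^{II}$-block of the right-hand side of \eqref{Compact} is nonzero, hence $\eta_U \cdot (0, M^\e \eta_{U^{II}})^T = \eta_{U^{II}}\cdot M^\e \eta_{U^{II}}$, and the $-\e^{-2}$ prefactor produces precisely the asserted $\sigma$. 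Non-positivity of $\sigma$ then follows from the positive-definiteness of $\bb M(\nu, u, \e\bb w, \e\bb c)$ stipulated in Postulate I'.

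There is no substantive obstacle remaining. The only points meriting mild care are tracking the different $\e$-weights attached to $F$ and $G$ so that the entropy flux splits cleanly as $\bb v \eta + \e^{-1} \theta^{-1}\eta_{\rho\bb w}$, and recalling that the symmetry of $\eta_{\rho\bb c}$ (already exploited inside \propref{Symmetrizable}) is what collapses the cross term $\tfrac{1}{2}\eta_{\rho\bb c}:(\nabla\bb v + \nabla\bb v^T)$ into $\eta_{\rho\bb c}:\nabla\bb v$ and thus lets the $G$-contribution appear as a pure divergence. Once these bookkeeping items are set, the single contraction of \eqref{Compact} by $\eta_U^T$ delivers both the symmetrizer identity and the dissipative entropy equation in one stroke, which is precisely the hallmark of the conservation-dissipation formalism underlying Postulate I'.
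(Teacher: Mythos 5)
Your proposal is correct and takes exactly the route the paper intends: the paper offers no separate proof (it says only that ``with the above preparations, the evolution of the entropy $\eta(U)$ is readily calculated''), meaning precisely your argument --- symmetrizability via the symmetrizer $\eta_{UU}$, which is symmetric positive definite by \propref{prop51} and makes $\eta_{UU}\bigl(F_{jU}+\e^{-1}G_{jU}\bigr)$ symmetric by \propref{Symmetrizable}, together with the entropy balance obtained by contracting \eqref{Compact} with $\eta_U$ and invoking the identities $\eta_U\cdot(\nabla\cdot F(U))=\nabla\cdot(\bb v\,\eta)$ and $\eta_U\cdot(\nabla\cdot G(U))=\nabla\cdot(\theta^{-1}\eta_{\rho\bb w})$ already established in the proof of \propref{Symmetrizable}. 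Your handling of the source, $\eta_U\cdot\bigl(0,\,M^\e\eta_{U^{II}}\bigr)^T=\eta_{U^{II}}\cdot M^\e\,\eta_{U^{II}}$ with $\sigma\le 0$ from the positive definiteness of $\bb M$ in Postulate I', is likewise exactly the paper's implicit computation.
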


Next we introduce the normal form \cite{KY1} of the generalized hydrodynamic system \eqref{Compact}. Set
$$
V=\left( \begin{array}{l}
 V^I \\
 V^{II}
 \end{array} \right)=\left( \begin{array}{l}
 U^I \\
 \eta_{U^{II}}(U)
 \end{array} \right).
$$
Notice that this transform has a global inverse $U=U(V)$ thanks to the strict convexity of $\eta(U)$. The governing equation for $V$ is
\begin{equation}\label{71}
\partial_t V+\sum\limits_{j=1}^dA_j(V)\partial_{x_j}V+\frac{1}{\e}\sum\limits_{j=1}^dB_j(V)\partial_{x_j}V=-\dfrac{1}{\e^2}\left( \begin{array}{c}
0 \\
H(V; \e) V^{II} \\
 \end{array} \right),
\end{equation}
where $A_j(V)= J  F_{jU} J^{-1}$ with $J = \frac{\partial V}{\partial U}$, $B_j(V)= J G_{jU} J^{-1}$ and $H(V; \e)=\eta_{U^{II}U^{II}} M(U; \e)$. This is the normal form of the generalized hydrodynamics \eqref{Compact}. It is a symmetrizable hyperbolic system with symmetrizer
$$
A_0(V)= (J^{-1})^T\eta_{UU} J^{-1},
$$
for $A_0(V)$ is symmetric and positive definite, $A_0(V)A_j(V)$ and $A_0(V)B_j(V)$ are symmetric. Moreover, the symmetrizer is block-diagonal \cite{KY1}
\begin{equation}\label{72}
A_0(V)=\left( {\begin{array}{*{20}{c}}
   {\eta_{U^{I}U^{I}}-\eta_{U^{I}U^{II}}\eta_{U^{II}U^{II}}^{-1}\eta_{U^{II}U^{I}}} & {0}  \\
   {0} & {\eta_{U^{II}U^{II}}^{-1}}
\end{array}} \right)\equiv \left({\begin{array}{*{20}{c}}
   {A_0^{I,I}}(V) & 0  \\
   0 & {A_0^{II,II}}(V)
\end{array}} \right).
\end{equation}

$A_0^{II,II}(V)H(V; \e)=M(V; \e)$ is positive definite. In this normal form, the equilibrium state reaches when $V^{II}=0$.



\begin{prop}\label{prop e}
Assume. Then we have
\begin{enumerate}
\item $A_0^{I,I}(V)=\eta_{U^IU^I}(U(V^I,0))+O(|V^{II}|^2)$;

\item $B_j^{I,I}(V^I, {V^{II}=0})=0$.
%
\end{enumerate}
\end{prop}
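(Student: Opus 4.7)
The plan hinges on expanding the non-equilibrium entropy around the equilibrium manifold. By Assumption~\ref{CPT}, $s(\nu,u,\bb z)$ reduces to $s^{eq}(\nu,u)$ at $\bb z=0$; by Assumption~\ref{Causal}, this manifold coincides with $\{s_\bb z=0\}$, so $s_\bb z(\nu,u,0)\equiv 0$ as well. Taylor's theorem therefore yields
$$s(\nu,u,\bb z)=s^{eq}(\nu,u)+\tfrac12\bb z^{T}H(\nu,u)\bb z+O(|\bb z|^3),\qquad H:=s_{\bb z\bb z}(\nu,u,0),$$
with no term linear in $\bb z$. Rewriting in the $U$-variables via $\bb z=U^{II}/\rho$ and multiplying by $-\rho$ gives
$$\eta(U)=\eta_0(U^I)+\tfrac12(U^{II})^{T}K(U^I)\,U^{II}+O(|U^{II}|^3),\qquad \eta_0(U^I):=-\rho s^{eq}(\nu,u).$$
The absence of a linear-in-$U^{II}$ term is the structural fact driving both assertions.

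For (1), I differentiate this expansion twice in $U^I$ to obtain $\eta_{U^IU^I}(U)=\eta_{U^IU^I}(U^I,0)+O(|U^{II}|^2)$, and once in each of $U^I$ and $U^{II}$ to get $\eta_{U^IU^{II}}(U)=O(|U^{II}|)=O(|V^{II}|)$ (which is precisely Corollary~\ref{cor_comp}). The Schur-complement correction appearing in \eqref{72} is therefore
$$\eta_{U^IU^{II}}\,\eta_{U^{II}U^{II}}^{-1}\,\eta_{U^{II}U^I}=O(|V^{II}|)\cdot O(1)\cdot O(|V^{II}|)=O(|V^{II}|^2),$$
since $\eta_{U^{II}U^{II}}^{-1}\to K(U^I)^{-1}$ as $V^{II}\to 0$. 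Combining these two estimates with the identity $U(V^I,0)=(V^I,0)$ (a restatement of Causality in the $V$-coordinates) gives the desired $A_0^{I,I}(V)=\eta_{U^IU^I}(U(V^I,0))+O(|V^{II}|^2)$.

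For (2), the key observation is that $G^I$ depends on $U$ only through the non-equilibrium fluxes $\bb q=s_\bb w$ and $\bbtau=\theta s_\bb c$, and both vanish identically on $\{U^{II}=0\}$ because $s_\bb z(\nu,u,0)\equiv 0$. Hence $G_j^I(U^I,0)\equiv 0$ as a function of $U^I$, and differentiation in $U^I$ gives $G_{j,U^I}^{I}\big|_{U^{II}=0}=0$. Writing $B_j=JG_{jU}J^{-1}$ out with the explicit block forms of $J$ and $J^{-1}$ (whose first block column is $(I,-\eta_{U^{II}U^{II}}^{-1}\eta_{U^{II}U^I})^T$) produces
$$B_j^{I,I}=G_{j,U^I}^I-G_{j,U^{II}}^I\,\eta_{U^{II}U^{II}}^{-1}\,\eta_{U^{II}U^I}.$$
At $V^{II}=0$ the first summand vanishes by the observation above, and the second vanishes by $\eta_{U^{II}U^I}\big|_{U^{II}=0}=0$, proving $B_j^{I,I}(V^I,0)=0$.

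The one delicate point is part (1): a naive Taylor expansion of $\eta_{U^IU^I}(U(V))$ in $V^{II}$ would leave an $O(|V^{II}|)$ remainder, and it is precisely the cancellation of the linear-in-$\bb z$ term in the expansion of $s$ (a joint consequence of Assumptions~\ref{CPT} and \ref{Causal}) that upgrades this to $O(|V^{II}|^2)$. Everything else is bookkeeping with the block structure of $J$ and $J^{-1}$.
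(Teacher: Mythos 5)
Your proposal is correct and follows essentially the same route as the paper: the paper's proof consists precisely of the block Schur-complement formulas $A_0^{I,I}=\eta_{U^IU^I}-\eta_{U^IU^{II}}\eta_{U^{II}U^{II}}^{-1}\eta_{U^{II}U^I}$ and $B_j^{I,I}=G^I_{jU^I}-G^I_{jU^{II}}\eta_{U^{II}U^{II}}^{-1}\eta_{U^{II}U^I}$, together with the observation that $\eta_{U^{II}}(U)$, $G_j^I(U)$ and $U^{II}$ all vanish at equilibrium. Your only addition is to make explicit the quadratic cancellation behind the $O(|V^{II}|^2)$ in part (1) --- namely $\eta_{U^{II}}(U^I,0)\equiv 0$ forces $\eta_{U^IU^IU^{II}}(U^I,0)=0$, killing the linear term --- a step the paper compresses into ``we immediately obtain,'' and which you have correctly identified and justified.
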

\begin{proof}
By direct computations, we have
$$
A_0^{I,I}(V)={\eta_{U^{I}U^{I}}-\eta_{U^{I}U^{II}}\eta_{U^{II}U^{II}}^{-1}\eta_{U^{II}U^{I}}}
$$
and
$$
B_j^{I,I}(V)={G^{I}_{j{U^{I}}}} - {G^{I}_{j{U^{II}}}}\eta _{{U^{II}}{U^{II}}}^{ - 1}{\eta _{{U^{II}}{U^{I}}}}.
$$
Since $\eta_{U^{II}}(U), G_j^I(U)$ and $U^{II}$ all vanish at equilibrium, we immediately obtain (1) and (2).
\end{proof}

%
%

\section{\label{Existence} A convergence-stability principle}
\setcounter{equation}{0}

In this section, we introduce the convergence-stability principle \cite{Yong_book} to present a framework for proving the main result \thmref{thm1}. Recall in the previous section that the generalized hydrodynamic system \eqref{Compact} has been shown to be symmetrizable hyperbolic.

Fix $\e$. Let $s>d/2 + 1$ be an integer, $U_0=U_0(x,\e)\in H^s(\Omega)$ and take values in a compact set $G_0\subset\subset G$ for all $(x,\e)$. According to the local existence theory \cite{A.Majda1984} for symmetrizable hyperbolic systems (see Theorem 2.1 in \cite{A.Majda1984}), there is a time interval $[0, \tilde t]$ so that the generalized hydrodynamic system \eqref{Compact} has a unique $H^s$-solution $U^\e\in C([0,\tilde t],H^s(\Omega))$ with initial value $U_0(x,\e)$. Notice that the time interval for the existence depends on $\e$ in general. For $G_1\subset\subset G$ satisfying $G_0\subset\subset G_1$, we define
\begin{equation}\label{61}
t_\e =\sup\{\tilde t>0: U^\e(\cdot, t)\in H^s(\Omega), U^\e(x,t)\in G_1\}.
\end{equation}
Namely, $[0,t_\e)$ is the maximal time interval of $H^s(\Omega)$-existence. Note that $t_\e$ may tend to 0 as $\e$ goes to 0.

In order to show that $\liminf_{\e\rightarrow0} t_\e>0$, we adopt the convergence-stability principle formulated in \cite{Yong_book}. Namely, we suppose that an approximate solution $U_\e=U_\e(x, t)$ has been found and satisfies the following

\noindent {\it{Convergence Assumption:}}  There exists $t_*>0$ so that $U_\e\in L^\infty([0,t_*],H^s)$ for each $\e>0$ and possesses the following properties
$$
\mathop{\cup}\limits_{x,t,\e}\{U_\e(x,t)\}\subset\subset G_1
$$
and for $t\in[0,\min\{t_*,t_\e\})$,
$$
\sup\limits_{x,t}|U^\e(x,t)-U_\e(x,t)|=o(1),
$$
$$
\sup\limits_{t}\parallel U^\e(\cdot,t)-U_\e(\cdot,t)\parallel_s=O(1)
$$
as $\e$ tends to 0.

With $U_\e=U_\e(x, t)$ satisfying the above assumption, the following fact was established in \cite{Yong_book}:
\begin{lem}\label{lem61}
Let $s>d/2 + 1$ be an integer, $U_0=U_0(x,\e)\in H^s(\Omega)$ and take values in a compact set $G_0\subset\subset G$ for all $(x,\e)$. Suppose an approximate solution $U_\e=U_\e(x, t)$ has been found and satisfies the convergence assumption above. Then there exists $\e_0>0$ such that, for $\e<\e_0$, the maximal existence time $t_\e$ defined in \eqref{61} is larger than $t_*$ in the convergence assumption: $t_\e>t_*. $
\end{lem}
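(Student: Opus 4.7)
The plan is to argue by contradiction. Suppose instead that along some sequence $\e_k\to 0$ we have $t_{\e_k}\le t_*$. The goal is to use the Convergence Assumption to show the $H^s$-solution $U^{\e_k}$ can be extended past $t_{\e_k}$ while keeping its values in $G_1$, contradicting the maximality in the definition \eqref{61}. The tool from symmetric hyperbolic theory is the standard continuation criterion (see, e.g., Theorem 2.2 of \cite{A.Majda1984}): an $H^s$-solution can be continued as long as $\|U^\e(\cdot,t)\|_s$ stays finite and $U^\e(x,t)$ remains in a fixed compact subset of the state space $G$. So I would verify both ingredients on $[0,t_{\e_k})$.

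For the pointwise ingredient, since $\bigcup_{x,t,\e}\{U_\e(x,t)\}\subset\subset G_1$, the number $\delta:=\mathrm{dist}(\overline{\bigcup_{x,t,\e}\{U_\e\}},\,\partial G_1)$ is strictly positive, and the closed $\delta/2$-neighborhood $K$ of $\overline{\bigcup U_\e}$ is a compact subset of $G_1$. The hypothesis $\sup_{x,t}|U^\e-U_\e|=o(1)$ (valid on $[0,\min\{t_*,t_\e\})=[0,t_{\e_k})$ under our assumption) then gives $|U^{\e_k}(x,t)-U_{\e_k}(x,t)|<\delta/2$ for $k$ large, so $U^{\e_k}(x,t)\in K\subset\subset G_1$ on all of $[0,t_{\e_k})$. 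This prevents the solution from approaching $\partial G_1$.

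For the Sobolev ingredient, since $U_{\e_k}\in L^\infty([0,t_*],H^s(\Omega))$ and $\sup_t\|U^{\e_k}-U_{\e_k}\|_s=O(1)$, the triangle inequality yields a finite bound $\sup_{t\in[0,t_{\e_k})}\|U^{\e_k}(\cdot,t)\|_s\le C_{\e_k}<\infty$. (Uniformity in $k$ is not needed here; positivity of the continuation interval for each fixed $\e_k$ is enough.) Combining the two controls and applying the continuation criterion produces some $\tau_{\e_k}>0$ such that $U^{\e_k}$ extends to an $H^s$-solution on $[0,t_{\e_k}+\tau_{\e_k})$ with values still in $G_1$. This contradicts $t_{\e_k}=\sup\{\tilde t:\,U^{\e_k}(\cdot,t)\in H^s,\,U^{\e_k}(x,t)\in G_1\}$, and hence $t_\e>t_*$ for all sufficiently small $\e$.

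The argument is essentially topological/bookkeeping: the Convergence Assumption has been tailored precisely to supply the two inputs a continuation theorem requires. The only subtle point is the choice of the compact intermediate set $K$ with $\overline{\bigcup U_\e}\subset K\subset\subset G_1$, which isolates $U^{\e_k}$ from $\partial G_1$ uniformly in $t$; everything else is a direct application of the local existence machinery for symmetrizable hyperbolic systems, which has already been shown in Section~\ref{Stru} to cover \eqref{Compact}.
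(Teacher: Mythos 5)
Your proof is correct, and it coincides with the paper's own treatment: the paper does not prove Lemma~\ref{lem61} but cites it from \cite{Yong_book}, where the convergence-stability principle is established by exactly this argument --- a contradiction via the standard continuation criterion for symmetrizable hyperbolic systems, with the pointwise $o(1)$ bound keeping $U^\e$ in a compact subset $K\subset\subset G_1$ and the $O(1)$ Sobolev bound keeping $\|U^\e(\cdot,t)\|_s$ finite up to $t_\e$. Your handling of the one subtle point (the intermediate compact set isolating $U^\e$ from $\partial G_1$ uniformly in $t$) is precisely what makes the continuation past $t_\e$ contradict the maximality in \eqref{61}.
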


Thanks to this lemma, our task is reduced to construct the approximate solution $U_\e$ and to estimate $\|U^\e(t)-U_\e(t)\|_s$ for $t\in [0,\min\{t_\e,t_*\})$. In this time interval, both $U^{\e}$ and $U_\e$ are regular enough and take values in the compact set $G_1$.

We conclude this section with the well-known calculus inequalities in Sobolev spaces, which can be found in \cite{A.Majda1984} and will be used for the estimate.
\begin{lem}\label{lem62}
Let $s,s_1$ and $s_2$ be three non-negative integers and $s_0=[d/2]+1$.\\
1). If $s_3=\min\{s_1,s_2,s_1+s_2-s_0\}\geq0$, then $H^{s_1}H^{s_2}\subset H^{s_3}$. Here the inclusion symbol $\subset$ implies the continuity of the embedding.\\
2). Suppose $s\geq s_0+1, A(V)\in H^s$, and $Q(V)\in H^{s-1}$. Then for all
multi-indices $\alpha$ with $|\alpha|\leq s,
[A,\partial_\alpha]Q\equiv A\partial_x^\alpha Q-\partial_x^\alpha(AQ)\in L^2$ and
$$
\| A\partial_x^\alpha Q-\partial_x^\alpha (AQ) \|
\leq C_s\| A\|_s\| Q\|_{|\alpha|-1}.
$$
3). Suppose $s\geq s_0$, $A\in C_b^s(G)$, and $V\in H^s(\Omega, G)$. Then $A(V(\cdot))\in H^s$ and
$$
\| A(V(\cdot))\|_s\leq C_s|A|_s(1+\| V\|_s^s)
$$
Here and below, $C_s$ denotes a generic constant depending only on $s,n$ and $d$, and $|A|_s$ stands for $\sup\limits_{V\in G,|\alpha|\leq s}|\partial_V^\alpha A(V)|$.\\
\end{lem}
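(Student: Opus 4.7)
The three inequalities are classical Moser-type estimates; the cleanest route is to prove part (1) first, then deduce (2) and (3) from it by Leibniz-rule combinatorics.

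For (1), the engine is the Sobolev embedding $H^{s_0}(\Omega)\hookrightarrow L^\infty(\Omega)$ together with the more general $H^r\hookrightarrow L^p$ with $1/p=1/2-r/d$ for $0\le r<d/2$. Given a multi-index $\alpha$ with $|\alpha|\le s_3$, I would expand $\partial^\alpha(AQ)$ via Leibniz as $\sum_{\beta\le\alpha}\binom{\alpha}{\beta}\partial^\beta A\,\partial^{\alpha-\beta}Q$ and estimate each summand in $L^2$ by H\"older. When $|\beta|\le s_1-s_0$ the factor $\partial^\beta A$ still has at least $s_0$ derivatives to spare in $A$ and so sits in $L^\infty$, pairing against $\partial^{\alpha-\beta}Q\in L^2$; symmetrically when $|\alpha-\beta|\le s_2-s_0$. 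The potentially bad middle range, where both $|\beta|>s_1-s_0$ and $|\alpha-\beta|>s_2-s_0$, forces $|\alpha|>s_1+s_2-s_0$, which is excluded by the definition of $s_3$. This is precisely why the third term in $s_3=\min\{s_1,s_2,s_1+s_2-s_0\}$ appears.

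For (2), I would observe that the unique term in the Leibniz expansion of $\partial^\alpha(AQ)$ that dumps all derivatives onto $Q$, namely $A\,\partial^\alpha Q$, cancels against the first piece of the commutator, leaving $[A,\partial^\alpha]Q=-\sum_{0<\beta\le\alpha}\binom{\alpha}{\beta}\partial^\beta A\,\partial^{\alpha-\beta}Q$. Every surviving summand has $|\beta|\ge 1$ and $|\alpha-\beta|\le|\alpha|-1$, so $Q$ is differentiated at most $|\alpha|-1$ times while $A$ is differentiated at most $|\alpha|\le s$ times. Estimating each summand by part (1) with regularity budget $\|A\|_s$ on the $A$-factor and $\|Q\|_{|\alpha|-1}$ on the $Q$-factor gives the stated bound; I only need to check the index arithmetic so that the effective $s_3$ for each term is nonnegative, which holds because $s\ge s_0+1$.

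For (3), Fa\`a di Bruno's formula writes $\partial^\alpha(A(V))$ as a finite sum of terms of the form $(D^kA)(V)\cdot\partial^{\beta_1}V\cdots\partial^{\beta_k}V$ with $1\le k\le|\alpha|$ and $\beta_1+\cdots+\beta_k=\alpha$. The coefficient $(D^kA)(V)$ is bounded pointwise by $|A|_s$ since $V$ takes values in the compact set $G$. Iterating part (1) on the product of Sobolev factors bounds each such product in $L^2$ by $C_s\|V\|_s^k$, which is absorbed into $C_s(1+\|V\|_s^s)$; summing over the finitely many admissible partitions yields the composition estimate. The main obstacle throughout is the careful index bookkeeping in part (1); once that is settled, (2) and (3) follow almost formally from Leibniz and Fa\`a di Bruno combined with the compactness of the range of $V$.
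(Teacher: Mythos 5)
The paper itself never proves this lemma --- it is quoted as a standard calculus fact with a citation to Majda's book --- so your attempt has to be measured against the standard Moser-estimate argument. Your reductions in parts (2) and (3) are sound: the index bookkeeping for the commutator (each surviving Leibniz term pairs $\partial^\beta A\in H^{s-|\beta|}$ with $\partial^{\alpha-\beta}Q$ controlled by $\|Q\|_{|\alpha|-1}$, and $s\ge s_0+1$ keeps the effective $s_3$ nonnegative) and the Fa\`a di Bruno reduction both check out, \emph{granted} part (1). But part (1), on which everything rests, contains an arithmetic error that is a genuine gap. From $|\beta|>s_1-s_0$ and $|\alpha-\beta|>s_2-s_0$ (integers, so $|\beta|\ge s_1-s_0+1$ and $|\alpha-\beta|\ge s_2-s_0+1$) you may only conclude $|\alpha|\ge s_1+s_2-2s_0+2$, which implies the claimed $|\alpha|>s_1+s_2-s_0$ only when $s_0<2$, i.e. only in dimension $d=1$. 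For $d=2,3$, where $s_0=2$, the ``bad middle range'' is nonempty within $|\alpha|\le s_3$. Concretely: $d=3$, $s_1=s_2=2$, $s_3=2$, $|\alpha|=2$, $|\beta|=1$ gives the term $\partial^\beta A\,\partial^{\alpha-\beta}Q$ with both factors merely in $H^1(\mathbb{R}^3)$; since $H^1(\mathbb{R}^3)\not\hookrightarrow L^\infty$, your $L^\infty$--$L^2$ pairing fails for this term, yet it must be estimated.

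The repair is exactly the interpolated embeddings you announce at the outset but never deploy: in the middle range, put $\partial^\beta A\in H^{s_1-|\beta|}\hookrightarrow L^p$ and $\partial^{\alpha-\beta}Q\in H^{s_2-|\alpha-\beta|}\hookrightarrow L^q$ with $\frac1p+\frac1q=\frac12$, which is admissible because
\begin{equation*}
\bigl(s_1-|\beta|\bigr)+\bigl(s_2-|\alpha-\beta|\bigr)\;=\;s_1+s_2-|\alpha|\;\ge\;s_0\;>\;\frac d2,
\end{equation*}
with a small extra care at the endpoint where one exponent equals $d/2$ for even $d$ (there $H^{d/2}$ embeds in every finite $L^p$ but not $L^\infty$, and the surplus $s_0-d/2>0$ lets you shift integrability to the other factor); equivalently one can invoke Gagliardo--Nirenberg interpolation, which is how the textbook proof handles it. In the example above this is just $H^1\hookrightarrow L^6$ against $H^1\hookrightarrow L^3$. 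One secondary caveat in part (3): on $\Omega=\mathbb{R}^d$ the zeroth-order term $\|A(V)\|_{L^2}$ is infinite unless $A$ vanishes at the reference state ($V\in H^s$ forces $V\to0$ at infinity), so the composition estimate implicitly concerns $A(V)-A(0)$ or a periodic domain; the paper's own statement shares this convention, so it does not count against you, but it is worth a remark if you write the proof out.
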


\section{\label{Approximated} Approximate solutions}
\setcounter{equation}{0}

The main purpose of this section is to construct an approximate solution $U_\e$ based on the classical hydrodynamic equations \eqref{NSEq}. Let $V^{I}_{\e}=(\rho,\rho \mathbf {v},\rho e)^T$ be a solution to the classical hydrodynamic equations \eqref{NSEq} with $T$ the corresponding equilibrium temperature. We define
\begin{equation}
V^{II}_{\e}=-\e\left(\begin{array}{c}\lambda \nabla T \\ T^{-1}\bb D[\bb v]
\end{array}\right),
\qquad
V_{\e}=\left(\begin{array}{c} V^{I}_{\e}\\ V^{II}_{\e}
\end{array}\right)
\end{equation}
and
\begin{equation}\label{R}
R(V_\e)=\partial_t V_\e+\sum\limits_{j=1}^dA_j(V_\e)\partial_{x_j}V_\e+\frac{1}{\e}\sum\limits_{j=1}^dB_j(V_\e)\partial_{x_j}V_\e+\dfrac{1}{\e^2}\left( \begin{array}{c}
 0\\
H(V_\e; \e)V_\e^{II} \\
 \end{array} \right).
\end{equation}

About this $R(V_\e)\equiv\left(\begin{array}{c}R^I(V_\e)\\ R^{II}(V_\e)\end{array}\right)$, we have
\begin{lem}\label{lem4}
Let $s> d/2+1$ be an integer. Assume that the solution $(\rho,\rho\mathbf{v},\rho e)$ to the classical hydrodynamic equations \eqref{NSEq}
satisfies $(\rho,\rho\mathbf{v},\rho e)\in C([0,t_*],H^{s+2})\cap C^1([0,t_*],H^{s+1})$ with the equilibrium temperature $T$ having a positive lower bound. Then we have $V_\e\in C([0,t_*],H^{s+1})\cap C^1([0,t_*],H^{s})$ and $R(V_\e)\in C([0,t_*],H^s)$. Moreover, we have the following estimates
$$
\|V^{II}_{\e}\|_{s+1}\leq C\e,
\qquad \|R^{I}(V_\e)\|_s\leq C\e^2,\qquad \|R^{II}(V_\e)\|_s\leq C\e.
$$
Here and below, $C$ is a generic constant which may change from relations to relations.  \\
\end{lem}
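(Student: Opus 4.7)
The overall strategy is to exploit the fact that $V^I_\e$ solves the Navier--Stokes--Fourier system \eqref{NSEq} and that $V^{II}_\e$ realizes the first step of the Maxwell iteration \eqref{iteration}, so that the formal cancellations sketched in Section~4 translate into the claimed orders of error. The regularity of $V_\e$ and the bound $\|V^{II}_\e\|_{s+1}\le C\e$ are essentially free: since $T$ has a positive lower bound and $(\rho,\rho\bb v,\rho e)\in C([0,t_*],H^{s+2})\cap C^1([0,t_*],H^{s+1})$, the composition inequality \lemref{lem62}(3) places $\lambda(T)\nabla T$ and $T^{-1}\bb D[\bb v]$ in $C([0,t_*],H^{s+1})\cap C^1([0,t_*],H^s)$, and multiplying by $\e$ gives both statements at once.

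For $R^I(V_\e)$ the plan is to unfold the first block of \eqref{R} back into conservation-law form: since $V^I=U^I$, this first block coincides, as a PDE, with the first block of \eqref{Compact} evaluated at $U=U(V_\e)$. Subtracting \eqref{NSEq}, which $V^I_\e$ satisfies by hypothesis, reduces the task to controlling $\pi-p$, $\tfrac{1}{\e}\bbtau+\bb D[\bb v]$ and $\tfrac{1}{\e}\bb q+\lambda\nabla T$ at $U(V_\e)$. Assumption~\ref{Causal} together with \propref{prop_comp}(1) forces $U^{II}=O(|V^{II}_\e|)=O(\e)$, so \corref{cor_comp} yields $\pi-p=O(\e^2)$; and a one-term Taylor expansion of $\bbtau=\theta s_{\bb c}$ and $\bb q=s_{\bb w}$ around $\bb z=0$, combined with the compatibility $\bb M=(\bb K^{FNS})^{-1}+O(\e^2)$ from \propref{prop_comp}(3) and $\theta=T+O(\e^2)$, produces $\tfrac{1}{\e}\bbtau=-\bb D[\bb v]+O(\e^2)$ and $\tfrac{1}{\e}\bb q=-\lambda\nabla T+O(\e^2)$. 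Feeding these residuals into the product and composition estimates of \lemref{lem62} yields $\|R^I(V_\e)\|_s\le C\e^2$.

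The main obstacle is bounding $R^{II}(V_\e)$, since its terms $\tfrac{1}{\e^2}H(V_\e;\e)V^{II}_\e$ and $\tfrac{1}{\e}B_j(V_\e)\partial_{x_j}V_\e$ are each individually of size $O(1/\e)$. The plan is to recognize this pair as exactly the leading-order Maxwell balance of Section~4: using the block structure \eqref{72} together with the identity $H=\eta_{U^{II}U^{II}}M^\e$, their combined contribution equals, in the $U$-variable picture at $U(V_\e)$, the quantity $\tfrac{1}{\e^2}M^\e\eta_{U^{II}}+\tfrac{1}{\e}(\nabla\theta^{-1},-\tfrac{1}{2}(\nabla\bb v+\nabla\bb v^T))^{T}$, which vanishes to leading order by the defining choice of $V^{II}_\e$ and is reduced to $O(\e)$ in $H^s$ once $\theta=T+O(\e^2)$ and $M^\e=(\bb K^{FNS})^{-1}+O(\e^2)$ are substituted. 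The remaining contributions $\partial_t V^{II}_\e$ and the convective pieces $\nabla\cdot(\bb v\otimes U^{II})$ are themselves $O(\e)$ in $H^s$ since $V^{II}_\e$ is $O(\e)$ and inherits $C^1$-in-time regularity from the hypothesis. The continuity $R(V_\e)\in C([0,t_*],H^s)$ then follows routinely from \lemref{lem62}(3) applied to the smooth coefficient matrices $A_j,B_j,H$. The bookkeeping subtlety is that every Taylor expansion in $U^{II}$ and every nonlinear composition consumes one spatial derivative, which is precisely why the hypothesis demands $H^{s+2}$ regularity of the classical solution and not merely $H^{s+1}$.
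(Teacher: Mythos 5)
Your proposal is correct and follows essentially the same route as the paper's proof: the bound on $V^{II}_\e$ comes from the composition estimates in \lemref{lem62}, the estimate of $R^I$ reduces to the $O(\e^2)$ discrepancies $\pi-p$ and $\theta-T$ supplied by Corollary~\ref{cor_comp}, and the $O(1/\e)$ terms in $R^{II}$ cancel exactly through the defining relation $\nabla\cdot G^{II}(U(V_\e^I,0))+\tfrac{1}{\e}(\bb K^{FNS})^{-1}V_\e^{II}=0$, with \propref{prop_comp}(3) absorbing the dissipation-matrix discrepancy, exactly as in Section~\ref{Approximated}. One clarification: for $R^I$ no Taylor expansion of $s_{\bb w},s_{\bb c}$ and no $\bb M$-compatibility are needed, since $\bigl(\bb q,\theta^{-1}\bbtau\bigr)(U(V_\e))=\eta_{U^{II}}(U(V_\e))=V^{II}_\e$ holds \emph{exactly} by the definition of the normal-form variables — so $\tfrac{1}{\e}\bb q=-\lambda\nabla T$ with no error at all, and the only error sources are $\theta-T$ and $\pi-p$ — and the $H^{s+2}$ hypothesis is forced by the explicit gradients entering $V^{II}_\e$ and $R(V_\e)$, not because nonlinear compositions consume derivatives (the Moser-type estimates of \lemref{lem62} preserve the Sobolev order).
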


\begin{proof}
By using the calculus inequalities in Lemma \ref{lem62}, we deduce the regularity of $V_\e$ and $R(V_\e)$ and the estimate of $V^{II}_\e$ directly from the definitions and the conditions of the lemma.

To obtain the estimates of $R^I(V_\e)$ and $R^{II}(V_\e)$, we define $U_\e\equiv U(V_\e)=\eta_U^{-1}(V_\e)$ and recall
$$
\left(\begin{array}{c}\bb q(U_\e) \\ \theta(U_\e)^{-1}\bbtau(U_\e)
\end{array}\right)=\eta_{U^{II}}(U_\e)=V^{II}_\e=-\e\left(\begin{array}{c}\lambda \nabla T \\ T^{-1}\bb D[\bb v]
\end{array}\right).
$$
Moreover, it follows from Corollary \ref{cor_comp} and $U_\e^I = V_\e^I =(\rho,\rho \mathbf {v},\rho e)^T$ that
$$
\pi(U_\e) =\pi(U_\e^I, 0) +O(|V_\e^{II}|^2)=\pi((\rho,\rho \mathbf {v},\rho e), 0) +O(|V_\e^{II}|^2)=p+O(|V_\e^{II}|^2)
$$
and
$$
\theta(U_\e) =T+O(|V^{II}_\e|^2).
$$
Then from the Navier-Stokes equations and the definition of $R^I(V_\e)$ we get that
\begin{equation*}
\begin{split}
 R^{I}(V_\varepsilon)=\nabla  \cdot\left( \begin{array}{c}
 0 \\
 (\pi(U_\e)  - p)\bb I_d + (1-T^{-1}\theta(U_\e))\bb D[\bb v]\\
 (\pi(U_\e)  - p)\bb v + (1-T^{-1}\theta(U_\e))\bb D[\bb v]\bb v \\
 \end{array} \right).
 \end{split}
 \end{equation*}
This simply implies that $\|R^I(V_\e)\|_s\leq C\|V^{II}_\e\|_{s+1}^2\leq C\e^2$.

It remains to estimate $\|R^{II}(V_\e)\|_s$. From the definition \eqref{R}, it is easy to see that
\begin{equation*}
R^{II}(V_\e)=
\eta_{U^{II}U^{II}}\left(\partial_tU_\e^{II}+\nabla \cdot( F^{II}(U_\e)+\frac{1}{\e} G^{II}(U_\e))+\dfrac{1}{\e^2}M(U_\e; \e)V_\e^{II}\right)+\eta_{U^{II}U^{I}}R^{I}(V_\e).
\end{equation*}
Because of Eq. \eqref{funF} and Proposition \ref{prop_comp}, we have $F^{II}(U)=\bb v\otimes U^{II}$ and  $U^{II}(V)=O(|V^{II}|)$, which immediately gives
$$
\|\partial_tU_\e^{II}+\nabla \cdot F^{II}(U_\e)\|_s\leq C\e.
$$
On the other hand, notice that $V^{II}_\e$ is defined so that
$$
\nabla\cdot G^{II}(U(V_\e^I,0))+\dfrac{1}{\e} (\bb K^{FNS})^{-1}V_\e^{II}=0.
$$
Then we have
\begin{eqnarray*}
&&\| \nabla\cdot G^{II}(U_\e)+\dfrac{1}{\e}M(U_\e; \e)V_\e^{II}\|_s\\
=&&\|\nabla\cdot G^{II}(U_\e)+\dfrac{1}{\e}M(U_\e; \e)V_\e^{II}-\nabla\cdot G^{II}(U(V_\e^I,0))-\dfrac{1}{\e} (\bb K^{FNS})^{-1}V_\e^{II}\|_s\\
\le&&\|\nabla\cdot [G^{II}(U_\e)-G^{II}(U(V_\e^I,0))]\|_s +\dfrac{1}{\e}\|[M(U_\e; \e)-(\bb K^{FNS})^{-1}]V_\e^{II}\|_s\\
=&&\|\nabla[\theta(U_\e)^{-1}-T^{-1}]\|_s +\dfrac{1}{\e}\|O(\e|V_\e^{II}|)V_\e^{II}\|_s\\
\le&& C\e^2.
\end{eqnarray*}
Here we have used \eqref{funG} for $G^{II}(U)$, Proposition \ref{prop_comp} for $M(U_\e; \e)=(\bb K^{FNS})^{-1}+O(|\e V^{II}|)$, and the above proved fact $\theta(U_\e)=T+O(|V^{II}|^2)$.
Summing up the above estimates, we obtain
$$
\|R^{II}(V_\e)\|_s\leq C\e.
$$
This completes the proof.
\end{proof}

Denote by $V^\e$ the solution to the normal form \eqref{71} of the generalized hydrodynamic system \eqref{Compact}. In the next section we will prove the following theorem, which is more general than our main result Theorem \ref{thm1}.
\begin{thm}\label{CompTheo}
Let $s>d/2+1$ be an integer. Suppose the Navier-Stokes equations \eqref{NSEq} have a solution
$$
(\rho, \mathbf{v}, e)\in C([0,t_*],H^{s+2}(\Omega))\cap C^1([0,t_*],H^{s+1}(\Omega))
$$
with $t_*<\infty$ and the corresponding density and temperature have positive lower bounds.
Then there exist positive numbers $\e_0=\e_0(t_*)$ and $K=K(t_*)$ such that, for $\e\leq \e_0$, the normal form \eqref{71} with initial data $V_\e(x,0)$ has a unique classical solution $V^\e$ satisfying $$V^\e(x,t)\in C([0,t_*],H^s(\Omega))$$
and
$$
\mathop {\sup }\limits_{t \in [0,{t_*}]}\|V^\e(t)-V_\e(t)\|_s\leq K(t_*)\e^2.
$$
\end{thm}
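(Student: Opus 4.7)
The plan is to prove \thmref{CompTheo} via a Friedrichs-type $H^s$ energy estimate on the error $W:=V^\e-V_\e$, combined with the convergence-stability argument of Section~\ref{Existence}. Since the initial data are matched, $W(\cdot,0)\equiv 0$. Subtracting the normal form \eqref{71} for $V^\e$ from the equation satisfied by $V_\e$---which is \eqref{71} plus the residual $R(V_\e)$ estimated in \lemref{lem4}---produces an equation for $W$ whose leading matrices coincide with $A_j(V^\e)$, $B_j(V^\e)$, $H(V^\e;\e)$, with an inhomogeneity that combines $-R(V_\e)$ and nonlinear differences of the form $[A_j(V^\e)-A_j(V_\e)]\partial_{x_j}V_\e$, $\e^{-1}[B_j(V^\e)-B_j(V_\e)]\partial_{x_j}V_\e$, and $\e^{-2}[H(V^\e;\e)-H(V_\e;\e)]V_\e^{II}$. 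The target bound is $\sup_{t\in[0,\min\{t_\e,t_*\}]}\|W(t)\|_s\le K(t_*)\e^2$; once established, it supplies the convergence assumption required by \lemref{lem61}, yielding $t_\e>t_*$ and the theorem.

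For each multi-index $\alpha$ with $|\alpha|\le s$, I would apply $\partial_x^\alpha$ to the error equation and pair with $A_0(V^\e)\partial_x^\alpha W$ in $L^2$. Symmetry of $A_0A_j$ and $A_0B_j$ turns the first-order terms into divergences plus commutators; the commutators, the symmetrizer time derivative, and the nonlinear differences in the inhomogeneity are all controlled by products of $H^s$-norms via the Moser-type inequalities of \lemref{lem62}, producing contributions bounded by $C(\|W\|_s)\|W\|_s^2$ together with singular cross terms to be absorbed below. The relaxation term delivers the crucial dissipation
\begin{equation*}
\frac{1}{\e^2}\bigl\langle A_0^{II,II}(V^\e)H(V^\e;\e)\,\partial_x^\alpha W^{II},\,\partial_x^\alpha W^{II}\bigr\rangle=\frac{1}{\e^2}\bigl\langle M^\e\,\partial_x^\alpha W^{II},\,\partial_x^\alpha W^{II}\bigr\rangle\ge\frac{c}{\e^2}\|\partial_x^\alpha W^{II}\|_{L^2}^2,
\end{equation*}
using positive-definiteness of $M^\e$ on the compact range of $V^\e$.

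The principal obstacle is the $\e^{-1}B_j$ contribution, which at face value is catastrophic. It is tamed by the block decomposition of \propref{prop e}: part (2) gives $B_j^{I,I}(V)=O(|V^{II}|)$, hence $B_j^{I,I}(V^\e)=O(|W^{II}|+\e)$ in view of $\|V_\e^{II}\|\le C\e$ from \lemref{lem4}. The $\e$-piece cancels the singular prefactor, producing an $O(1)$ coefficient on $\partial W^I$, while the $|W^{II}|$-piece generates a term $\lesssim\e^{-1}\|W^{II}\|_s\|W^I\|_s$ that Young's inequality absorbs into the $\e^{-2}\|W^{II}\|_s^2$ dissipation with an $O(1)$ residue in $\|W^I\|_s^2$. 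The cross blocks $B_j^{I,II}$, $B_j^{II,I}$, $B_j^{II,II}$ and the $H$-difference are handled identically, since every surviving $\e^{-1}$ multiplies a factor that is either $\|W^{II}\|$ or $\|V_\e^{II}\|=O(\e)$. The source obeys $\langle R^I,W^I\rangle\le C\e^4+C\|W\|_s^2$ via $\|R^I\|_s=O(\e^2)$, while the borderline piece $\langle R^{II},W^{II}\rangle\le\tfrac{\delta}{\e^2}\|W^{II}\|^2+\tfrac{\e^2}{4\delta}\|R^{II}\|^2\le\tfrac{\delta}{\e^2}\|W^{II}\|^2+C\e^4$ is absorbed thanks to $\|R^{II}\|_s=O(\e)$.

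Summing over $|\alpha|\le s$, choosing the absorption constants small, and using that the $A_0$-weighted norm is equivalent to $\|\cdot\|_s$ on $G_1$, one arrives at
\begin{equation*}
\frac{d}{dt}\|W\|_s^2+\frac{c'}{\e^2}\|W^{II}\|_s^2\le C\bigl(1+\|W\|_s\bigr)\|W\|_s^2+C\e^4.
\end{equation*}
Since $W(0)=0$, a standard Gr\"onwall/continuation argument---maintaining the a priori bound $\|W\|_s\le 1$ for $\e$ sufficiently small---gives $\|W(t)\|_s\le K(t_*)\e^2$ on $[0,\min\{t_\e,t_*\}]$. Sobolev embedding then yields the $L^\infty$-smallness required by the convergence assumption, so \lemref{lem61} forces $t_\e>t_*$ for all small $\e$ and the estimate extends to the whole of $[0,t_*]$, completing the proof.
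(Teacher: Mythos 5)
Your overall strategy coincides with the paper's: energy estimates on $E=V^\e-V_\e$ with the block-diagonal symmetrizer $A_0(V^\e)$, dissipation extracted from the positive-definite $M^\e=A_0^{II,II}H$, the equilibrium vanishing $B_j^{I,I}(V^I,0)=0$ of \propref{prop e} together with $\|V_\e^{II}\|_{s+1}=O(\e)$ from \lemref{lem4} to tame the $\e^{-1}$ convection terms, Young absorption into the $\e^{-2}\|E^{II}\|^2$ dissipation, Gronwall plus the convergence-stability principle (\lemref{lem61}). However, there is one genuine gap in your absorption claim, located at the commutator part of $F_4^\alpha$ in \eqref{81}, namely $\e^{-2}[H(V^\e;\e),\partial_x^\alpha]E^{II}$ paired with $E^{II}_\alpha$. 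The Moser commutator estimate of \lemref{lem62} bounds this pairing by
\begin{equation*}
\frac{C}{\e^2}\,\|E^{II}\|_{|\alpha|-1}\,\|E_\alpha^{II}\|\;\le\;\frac{c_0}{2\e^2}\|E_\alpha^{II}\|^2+\frac{C^2}{2c_0\e^2}\|E^{II}\|_{|\alpha|-1}^2,
\end{equation*}
and here \emph{both} factors carry the singular weight: after absorbing the first piece, you are left with $C\e^{-2}\|E^{II}\|_{|\alpha|-1}^2$, which is of exactly the same order in $\e$ as the dissipation but with an uncontrollably large constant, so it cannot be absorbed into $c_0\e^{-2}\|E^{II}\|_{|\alpha|}^2$ at the top derivative level. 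Your assertion that ``every surviving $\e^{-1}$ multiplies a factor that is either $\|W^{II}\|$ or $O(\e)$'' is literally true for this term but does not make it absorbable, since only one of the two singular factors can be fed to the dissipation. The paper's remedy is structural: the leftover sits at the strictly lower derivative order $|\alpha|-1$, so after time integration one obtains the hierarchy \eqref{811} and runs an induction over the derivative order $k$ (anchored at $\|\cdot\|_{-1}=0$), using the order-$(k-1)$ dissipation integral to control the order-$k$ defect. Without this induction your claimed differential inequality $\frac{d}{dt}\|W\|_s^2+\frac{c'}{\e^2}\|W^{II}\|_s^2\le C(1+\|W\|_s)\|W\|_s^2+C\e^4$ does not close.

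A secondary, fixable imprecision: several cubic terms carry an explicit $\e^{-1}$, e.g.\ $\e^{-1}\|E^{II}\|_s\|E\|_{|\alpha|}^2$ from the $B_j^{I,I}$ commutators, so the nonlinearity is not $C(1+\|W\|_s)\|W\|_s^2$ but involves the rescaled quantity $D=\|E\|_s/\e$; the paper tracks powers up to $D^{2s+2}$ and closes with a nonlinear Gronwall-type inequality from \cite{Yong_JDE}, exploiting that $\Phi(0)=O(\e^2)$ is small. Your alternative---Young the singular cubic into a quartic $\|W\|_s^4$ plus dissipation, then run Gronwall under a continuation bound $\|W\|_s\le1$ maintained because the output $K\e^2$ beats the threshold for small $\e$---is a legitimate equivalent of the paper's $D$-boundedness argument and would be acceptable once the commutator-induction gap above is repaired.
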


%
%
%


\section{\label{Estimate}Error estimates}
\setcounter{equation}{0}

We prove Theorem \ref{CompTheo} in this section. Thanks to Lemma \ref{61}, we only need to estimate $\|V^\e(t)-V_\e(t)\|_s$ for $t\in [0,\min\{t_\e,t_*\})$. In this time interval, both $V^{\e}$ and $V_\e$ are regular enough and take values in a compact set. In what follows, we set
$$
E=V^\e-V_\e\equiv \left(\begin{array}{c}
E^I\\
E^{II}
\end{array}\right).
$$

Recall that $V^\e$ solves the equivalent equations in \eqref{71}, while $V_\e$ satisfies the same equations with a residual $R(V_\e)$ \eqref{R}. It follows that the error $E$ satisfies
\begin{equation*}
\begin{split}
\partial_tE+\sum\limits_{j=1}^dA_j(V^\e)\partial_{x_j}E+&\dfrac{1}{\e}\sum\limits_{j=1}^dB_j(V^\e)\partial_{x_j}E=-\dfrac{1}{\e^2}\left( \begin{array}{c}
0 \\
H^\e(V^\e)E^{II} \\
\end{array} \right)-\left( \begin{array}{c}
 R^I \\
 R^{II} \\
 \end{array} \right)\\
 &+\sum_{j=1}^d\big{(}A_j( V_{\e})-A_j( V^{\e})\big{)} \partial_{x_j}V_{\e}+\dfrac{1}{\e}\sum_{j=1}^d\big{(}B_j(V_{\e})-B_j(V^{\e})\big{)}\partial_{x_j}V_{\e }\\
 &+\dfrac{1}{\e^2}\left( \begin{array}{c}
0 \\
\big{(}H^\e(V_\e)-H^\e(V^\e)\big{)}V^{II}_{\e} \\
\end{array} \right).
 \end{split}
\end{equation*}
Differentiating the two sides of the last equation with $\partial_x^{\alpha}, \;|\alpha|\leq s$, and denoting $E_\alpha=\partial_x^{\alpha} E$, we get
\begin{equation*}
\partial_tE_{\alpha}+\sum\limits_{j=1}^d A_j(V^{\e}) \partial_{x_j}E_{\alpha}
+\dfrac{1}{\e}\sum\limits_{j=1}^dB_j(V^{\e}) \partial_{x_j}E_{\alpha }=F_1^\alpha+F_2^\alpha+F_3^\alpha+F_4^\alpha ,
\end{equation*}
where
\begin{equation}\label{81}
\begin{array}{l}
F_1^\alpha=-\dfrac{1}{\e^2}\left( \begin{array}{c}
0 \\
H^\e(V^\e) E^{II}_{\alpha} \\
\end{array} \right)-\left( \begin{array}{c}
 \partial_x^{\alpha}R^{I} \\
 \partial_x^{\alpha}R^{II}
 \end{array} \right),\\
 F_2^\alpha=\sum\limits_{j=1}^d\partial_x^\alpha\Big{\{}\Big{(}A_j(V_{\e})-A_j(V^{\e})+\dfrac{1}{\e}\big{(}B_j(V_{\e})-B_j(V^{\e})\big{)}\Big{)}\partial_{x_j}V_{\e}\Big{\}},\\
 F_3^\alpha=\sum\limits_{j=1}^d\Big{(}[A_j(V^{\e}),\partial_x^{\alpha}] \partial_{x_j}E+\dfrac{1}{\e}[B_j(V^{\e}),\partial_x^\alpha] \partial_{x_j}E\Big{)},\\
 F_4^\alpha=\dfrac{1}{\e^2}\left( \begin{array}{c}
0 \\ {}[H^\e(V^\e), \partial_x^\alpha]E^{II} +\partial_x^\alpha\Big{(}\big{(}H^\e(V_\e)-H^\e(V^\e)\big{)} V^{II}_\e\Big{)}
\end{array} \right).
\end{array}
\end{equation}
Multiplying $E_\alpha^TA_0(V^\e)$ on the above equation and using the fact that $A_0, A_0A_j$ and $A_0B_j$ are all symmetric yield
\begin{equation*}
\begin{split}
&\partial_t \big{(}E_{\alpha}^TA_0(V^\e)E_{\alpha}\big{)} +\sum\limits_{j=1}^d \partial_{x_j}\big{(}E_{\alpha}^TA_0( V^{\e})A_j(V^{\e}) E_{\alpha}\big{)}
+\dfrac{1}{\e}\sum\limits_{j=1}^d\partial_{x_j}\big{(}E_{\alpha}^TA_0(V^{\e})B_j(V^{\e})E_{\alpha}\big{)}\\
=&2E_{\alpha}^TA_0(V^\e)(F_{1}^{\alpha}+F_{2}^\alpha+F_{3}^{\alpha}+F_{4}^{\alpha})\\
&+E_{\alpha}^T\Big{(}\partial_t A_0(V^{\e})+\sum\limits_{j=1}^d \partial_{x_j}\big{(}A_0(V^\e)A_j(V^{\e})\big{)}+\dfrac{1}{\e}\sum\limits_{j=1}^d \partial_{x_j}\big{(}A_0(V^\e)B_j(V^{\e})\big{)}\Big{)}E_{\alpha}.
\end{split}
\end{equation*}
Integrating the last equation over $\Omega$ gives
\begin{equation}\label{82}
\begin{split}
& \dfrac{d}{d t} \int_\Omega E_{\alpha}^TA_0(V^\e) E_{\alpha}dx \\
= &
2\int_\Omega E_{\alpha}^TA_0( V^{\e})( F_1^\alpha+ F_2^\alpha+ F_3^\alpha+ F_4^\alpha)dx +\int_\Omega E_{\alpha}^T\partial_tA_0( V^{\e}) E_{\alpha}dx\\
& +\sum\limits_{j=1}^d \int_\Omega E_{\alpha}^T\partial_{x_j}\Big{(}A_0( V^{\e})A_j( V^{\e})+\dfrac{1}{\e}A_0( V^{\e})B_j( V^{\e})\Big{)} E_{\alpha}dx.
\end{split}
\end{equation}

Next we estimate the right-hand side of \eqref{82} term by term. Recall that $A_0$ is block-diagonal and $A_0^{II,II}(V)H(V; \e)=M(U(V); \e)$ is positive definite. Then we have
\begin{equation}\label{83}
\begin{split}
& \int_\Omega E_{\alpha}^TA_0( V^{\e})F_1^\alpha dx\\
=&-\dfrac{1}{\e^2}\int_\Omega (E_\alpha^{II})^TM(U(V^{\e}); \e)E_\alpha^{II}dx-\int_\Omega \big{(}(E_\alpha^{I})^T \partial_x^\alpha R^{I}+(E_\alpha^{II})^T \partial_x^\alpha R^{II}\big{)} dx\\
\leq & -c_0\dfrac{\|E_\alpha^{II}\|^2}{\e^2}+C\|E_\alpha\|^2+C\e^4
\end{split}
\end{equation}
with $c_0$ a generic positive constant due to the positivity of the matrix $M(U(V^{\e}); \e)$ for $V^{\e}$ taking values in the compact set, where we have used the estimates $\|R^{I}(V_\e)\|_{s}=O(\e^2)$ and $\|R^{II}(V_\e)\|_{s}=O(\e)$ in \lemref{lem4}.

To treat the other terms, we will repeatedly use the Sobolev calculus inequalities in Lemma \ref{lem62}, the boundedness of $\|\nabla V_\epsilon\|_s$ and $\|V^{II}_\e\|_{s+1}=O(\e)$.
We will also use the facts that $V^{\e}$ and $V_{\e}$ take values in the compact set, $A_0(V)$ is block-diagonal \eqref{72}, and $B_j^{I,I}(V)$ vanishes at equilibrium (Proposition \ref{prop e}). From the last fact, it is not difficult to deduce that
$$
\|B_j^{I,I}(V^\e)-B_j^{I,I}(V_\e)\|_{|\alpha|}\leq C(1+\|E\|_s^s)\Big{(}\|E^{II}\|_{|\alpha|}+\|V^{II}_\epsilon\|_s\|E^I\|_{|\alpha|}\Big{)}.
$$
Thanks to the last inequality, we have
\begin{equation}\label{84}
\begin{split}
& \int_\Omega E_{\alpha}^TA_0( V^{\e}) F_2^\alpha dx\\
=& \sum\limits_{j=1}^d\int_\Omega E_{\alpha}^TA_0( V^{\e}) \partial^{\alpha}_{x}\Big{\{}\Big{(}A_j(V_{\e})-A_j(V^{\e})+\dfrac{1}{\e}\big{(}B_j(V_{\e})-B_j(V^{\e})\big{)}\Big{)}\partial_{x_j}V_{\e}\Big{\}}dx\\
\leq& C\|E_\alpha\|\sum\limits_{j=1}^d\|A_j(V^\e)-A_j(V_\e) \|_{|\alpha|}\\
&+C\|E_\alpha^I\|\sum\limits_{j=1}^d\Big{(}\dfrac{1}{\e}\|B_j^{I,I}(V^\e)-B_j^{I,I}(V_\e)\|_{|\alpha|}+\|B_j^{I,II}(V^\e)-B_j^{I,II}(V_\e)\|_{|\alpha|}\Big{)}\\
&+C\|E_\alpha^{II}\|\sum\limits_{j=1}^d\Big{(}\dfrac{1}{\e}\|B_j^{II,I}(V^\e)-B_j^{II,I}(V_\e)\|_{|\alpha|}+\|B_j^{II,II}(V^\e)-B_j^{II,II}(V_\e) \|_{|\alpha|} \Big{)}\\
\leq& C(1+\|E\|_s^s)\big{(}\|E\|_{|\alpha|}^2+\|E\|_{|\alpha|}\dfrac{\|E^{II}\|_{|\alpha|}}{\e}\big{)}.
\end{split}
\end{equation}

Similarly, we deduce that
\begin{equation}\label{85}
\begin{split}
& \quad  \int_\Omega E_{\alpha}^TA_0( V^{\e}) F_3^\alpha dx\\
&=\sum\limits_{j=1}^d\int_\Omega E_{\alpha}^TA_0( V^{\e})
\Big{(}[A_j(V^{\e}),\partial_x^\alpha] \partial_{x_j}E+\dfrac{1}{\e}[B_j(V^{\e}),\partial_x^\alpha] \partial_{x_j}E\Big{)}dx\\
&\leq C\|E_\alpha\|\sum\limits_{j=1}^d\|[A_j(V^{\e}),\partial_x^\alpha] \partial_{x_j}E\|\\
&\quad+C\|E_\alpha^I\|\sum\limits_{j=1}^d\Big{\|}\dfrac{1}{\e}[B_j^{I,I}(V^\e),\partial_x^\alpha]\partial_{x_j}E^I+[B_j^{I,II}(V^\e),\partial_x^\alpha]\dfrac{\partial_{x_j} E^{II}}{\e}\Big{\|}\\
&\quad+C\dfrac{\|E_\alpha^{II}\|}{\e}\sum\limits_{j=1}^d\Big{\|}[B_j^{II,I}(V^\e),\partial_x^\alpha]\partial_{x_j}E^I+[B_j^{II,II}(V^\e),\partial_x^\alpha]\partial_{x_j}E^{II}\Big{\|}\\
&\leq C\|E_\alpha\|\sum\limits_{j=1}^d\Big{[}\big{(}\|A_j(V^\e)\|_s+\dfrac{1}{\e}\|B_j^{I,I}(V^\e)\|_s\big{)}\|E\|_{|\alpha|}\\
&\quad +\big{(}\|B_j^{I,II}(V^\e)\|_s+\|B_j^{II,I}(V^\e)\|_s+\|B_j^{II,II}(V^\e)\|_s\big{)}\dfrac{\|E^{II}\|_{|\alpha|}}{\e}\Big{]}\\
&\leq C(1+\|E\|_s^s)\Big{(}(1+\dfrac{1}{\e}\|E^{II}\|_s)\|E\|_{|\alpha|}^2+\|E\|_{|\alpha|}\dfrac{\|E^{II}\|_{|\alpha|}}{\e}\Big{)}.
\end{split}
\end{equation}
Here in the last step we have used that
$$
\|B_j^{I,I}(V^\e)\|_{|\alpha|}\leq C(1+\|E\|_s^s)\Big{(}\|E^{II}\|_{|\alpha|}+\|V^{II}_\epsilon\|_{|\alpha|}\Big{)},
$$
which follows from Proposition \ref{prop e} that $B_j^{I,I}(V)$ vanishes at equilibrium. In the same way, we have
\begin{equation}\label{86}
\begin{split}
&\quad \int_\Omega E_{\alpha}^TA_0( V^{\e}) F_4^\alpha dx\\
&=\dfrac{1}{\e^2}\int_\Omega E_{\alpha}^TA_0( V^{\e})\left( \begin{array}{c}
0 \\{}
[H(V^\e; \e), \partial_x^\alpha]E^{II} +\partial_x^\alpha\Big{[}\big{(}H(V_\e; \e)-H(V^\e; \e)\big{)} V^{II}_\e\Big{]}\\
\end{array} \right)dx\\
&\leq C\dfrac{1}{\e^2}\int_\Omega \left|E_{\alpha}^{II}\right|\left|[H(V^\e; \e), \partial_x^\alpha]E^{II} +\partial_x^\alpha\big{[}(H(V_\e, \e)-H(V^\e; \e))V^{II}_\e\big{]}\right|dx\\
&\leq C\|H(V^\e; \e)\|_s\dfrac{\|E_\alpha^{II}\|}{\e}\dfrac{\|E^{II}\|_{|\alpha|-1}}{\e}+C(1+\|E\|_s^s)\|E\|_{|\alpha|}\dfrac{\|E^{II}_\alpha\|}{\e}\dfrac{\|V^{II}_\e\|_s}{\e}\\
&\leq C(1+\|E\|_s^s)\big{(} \|E\|_{|\alpha|}+ \dfrac{\|E^{II}\|_{|\alpha|-1}}{\e} \big{)}\dfrac{\|E_\alpha^{II}\|}{\e}
\end{split}
\end{equation}
and
\begin{equation}\label{87}
\begin{split}
&\quad \int_\Omega E_{\alpha}^T\Big{(}\sum\limits_{j=1}^d \partial_{x_j}(A_0( V^{\e})A_j( V^{\e}))+\dfrac{1}{\e}\sum\limits_{j=1}^d \partial_{x_j}(A_0( V^{\e})B_j( V^{\e}))\Big{)} E_{\alpha}dx\\
&\leq C\sum\limits_{j=1}^d \Big{(}|\partial_{x_j}V^\e|_\infty+\dfrac{1}{\e}|\partial_{x_j}\big{(}A_0^{I,I}(V^\e)B_j^{I,I}(V^\e)\big{)}|_\infty\Big{)}\|E\|_{|\alpha|}^2 \\
& +C\sum\limits_{j=1}^d|\partial_{x_j}V^\e|_\infty\|E\|_{|\alpha|}\dfrac{\|E^{II}\|_{|\alpha|}}{\e}\\
&\leq C(1+\|V^\e\|_s)(1+\dfrac{\|V^{II}\|_s}{\e})\|E\|_{|\alpha|}^2+C\|V^\e\|_s\|E\|_{|\alpha|}\dfrac{\|E^{II}\|_{|\alpha|}}{\e}\\
&\leq C(1+\|E\|_s)(1+\dfrac{\|E^{II}\|_s}{\e})\|E\|_{|\alpha|}^2+C(1+\|E\|_s)\|E\|_{|\alpha|}\dfrac{\|E^{II}\|_{|\alpha|}}{\e}.
\end{split}
\end{equation}

To estimate the remaining term, we need Proposition \ref{prop e} and the equations in \eqref{71}

\begin{equation}\label{88}
\begin{split}
&\int_\Omega E_{\alpha}^T\partial_tA_0( V^{\e})E_{\alpha}dx\\
=&\int_\Omega (E_{\alpha}^{I})^T\partial_tA_0^{I,I}( V^{\e})E_{\alpha}^{I}dx+\int_\Omega (E_{\alpha}^{II})^T\partial_tA_0^{II,II}( V^{\e})E_{\alpha}^{II}dx\\
\leq & C\int_\Omega \Big{(}|E_{\alpha}^{I}|^2(|\partial_tV^{I\e}|+|V^{II\e}\partial_tV^{II\e}|+|V^{II\e}|^2|\partial_tV^{\e}|)+ |E_{\alpha}^{II}|^2|\partial_tV^{\e}|\Big{)}dx\\
\leq & C\Big{(}|\partial_tV^{I\e}|_\infty+|V^{II\e}|_\infty |\partial_tV^{II\e}|_\infty+|V^{II\e}|_\infty^2|\partial_tV^{\e}|_\infty\Big{)}\|E\|_{|\alpha|}^2+C|\partial_tV^\e|_\infty\|E^{II}\|_{|\alpha|}^2\\
\leq & C\|E\|_{|\alpha|}^2\sum\limits_{j=1}^d\Big{(}|\partial_{x_j}V^\e|_\infty+\dfrac{1}{\e}|B_j^{I,I}(V^\e)|_\infty |\partial_{x_j}V^{I\e}|_\infty+\dfrac{1}{\e}|\partial_{x_j}V^{II\e}|_\infty\\
& +|V^{II\e}|_\infty(\dfrac{1}{\e}|\partial_{x_j}V^\e|_\infty+\dfrac{1}{\e^2}|V^{II\e}|_\infty)+|V^{II\e}|_\infty^2(\dfrac{1}{\e}|\partial_{x_j}V^\e|_\infty+\dfrac{1}{\e^2}|V^{II\e}|_\infty)\Big{)}\\
&+C\|E^{II}\|_{|\alpha|}^2\sum\limits_{j=1}^d(\dfrac{1}{\e}|\partial_{x_j}V^\e|_\infty+\dfrac{1}{\e^2}|V^{II\e}|_\infty)\\
\leq &  C\|E\|_{|\alpha|}^2\left(\|V^\e\|_s+\dfrac{\|V^{II\e}\|_s}{\e}+\|V^\e\|_s\dfrac{\|V^{II\e}\|_s}{\e}+\dfrac{\|V^{II\e}\|_s^2}{\e^2}
\right)\\
&+C(\|V^\e\|_s+\dfrac{\|V^{II\e}\|_s}{\e})\|E\|_{|\alpha|}\dfrac{\|E^{II}\|_{|\alpha|}}{\e}\\
\leq & C\|E\|_{|\alpha|}\Big{(}(1+\|V^\e\|_s)(1+\dfrac{\|V^{II\e}\|_s}{\e})^2\|E\|_{|\alpha|}+(\|V^\e\|_s+\dfrac{\|V^{II\e}\|_s}{\e})\dfrac{\|E^{II}\|_{|\alpha|}}{\e}\Big{)}\\
\leq & C\|E\|_{|\alpha|}\Big{(}(1+\|E\|_s)(1+\dfrac{\|E^{II}\|_s}{\e})^2\|E\|_{|\alpha|}+(1+\|E\|_s+\dfrac{\|E^{II}\|_s}{\e})\dfrac{\|E^{II}\|_{|\alpha|}}{\e}\Big{)}.\\
\end{split}
\end{equation}

Substituting the estimates in \eqref{83}-\eqref{88} into \eqref{82}, we obtain
\begin{equation}\label{89}
\begin{split}
& \dfrac{d}{d t} \int_\Omega E_{\alpha}^TA_0(V^\e) E_{\alpha}dx\\
\leq & -2c_0\dfrac{\|E_\alpha^{II}\|^2}{\e^2}+C\e^4+C(1+\|E\|_s^s)\big{(}\|E\|_{|\alpha|}^2+\|E\|_{|\alpha|}\dfrac{\|E^{II}\|_{|\alpha|}}{\e}\big{)}\\
&+C(1+\|E\|_s^s)(1+\dfrac{1}{\e}\|E^{II}\|_s)\|E\|_{|\alpha|}^2\\
&+C(1+\|E\|_s^s)\dfrac{\|E^{II}\|_{|\alpha|-1}}{\e} \dfrac{\|E_\alpha^{II}\|}{\e}\\
&+C(1+\|E\|_s)(1+\dfrac{\|E^{II}\|_s}{\e})^2\|E\|_{|\alpha|}^2\\
&+C(1+\|E\|_s+\dfrac{\|E^{II}\|_s}{\e})\|E\|_{|\alpha|}\dfrac{\|E^{II}\|_{|\alpha|}}{\e}.
\end{split}
\end{equation}
Set
$$
D=D(t)=\dfrac{\|E\|_s}{\e}.
$$
By using the Cauchy inequality and the inequality
$$
\forall k,l>0, \ k<l,\qquad x^k\leq 1+x^l \quad\text{for all} \quad x>0,
$$
we deduce from \eqref{89} that
\begin{equation}\label{810}
\begin{split}
&\dfrac{d}{d t} \int_\Omega E_{\alpha}^TA_0(V^\e) E_{\alpha}dx+2c_0\dfrac{\|E_\alpha^{II}\|^2}{\e^2}\\
\leq & C\e^4+ C(1+D^{2s+2})\|E\|_{|\alpha|}^2+\delta\dfrac{\|E^{II}\|_{|\alpha|}^2}{\e^2}+C\dfrac{\|E^{II}\|_{|\alpha|-1}^2}{\e^2}
\end{split}
\end{equation}
with $\delta$ a parameter to be fixed.

Since $A_0(V^\e)$ is uniformly positive definite and the initial error satisfies $\|V^\e(\cdot, 0) -V_\e(\cdot, 0)\|_s=O(\e^2)$, we integrate the last inequality from $0$ to $\hat t$ with $\hat t <\min\{t_\e,t_*\}$ to obtain
\begin{equation*}
\begin{split}
C^{-1}\|E_\alpha\|^2+2c_0\int_0^{\hat t} \dfrac{\|E_\alpha^{II}\|^2}{\e^2}dt\leq & C(t_*)\e^4+ C\int_0^{\hat t}(1+D^{2s+2})\|E\|_{|\alpha|}^2dt\\
&+\delta\int_0^{\hat t}\dfrac{\|E^{II}\|_{|\alpha|}^2}{\e^2}dt+C\int_0^{\hat t}\dfrac{\|E^{II}\|_{|\alpha|-1}^2}{\e^2}dt.
\end{split}
\end{equation*}
Summing up the above inequality over the multi-index $\alpha$ with $|\alpha|\leq k$ for $k\leq s$ and taking a sufficiently small $\delta$ give
\begin{equation}\label{811}
\begin{split}
&C^{-1}\| E\|_k^2+\dfrac{c_0}{\e^2}\int_0^{\hat t}\| E^{II}\|_k^2dt\\
 \leq &C\int_0^{\hat t}\| E\|_{k}^2(1+D^{2s+2})dt+ C\dfrac{1}{\e^2}\int_0^{\hat t}\| E^{II}\|^2_{k-1}dt+C(t_*)\e^4.
\end{split}
\end{equation}
Recall that $\|\cdot\|_{-1}=0$. A simple induction based on \eqref{811} yields
\begin{equation*}
\dfrac{1}{\e^2}\int_0^{\hat t}\| E^{II}\|_k^2dt\leq C\int_0^{\hat t}\| E\|_{k}^2(1+D^{2s+2})dt+C(t_*)\e^4.
\end{equation*}
Combining this and \eqref{811} with $k=s$, we get
\begin{equation*}
 \| E\|_s^2\leq C\int_0^{\hat t}\| E\|_{s}^2(1+D^{2s+2})dt+C(t_*)\e^4.
\end{equation*}

Now we apply Gronwall's lemma to the last inequality to get
\begin{equation}\label{e-8}
 \| E\|_s^2\leq C(t_*)\e^4\exp \Big(C\int_0^{\hat t}(1+D^{2s+2})dt\Big).
\end{equation}
Since $\| E(t)\|_s=\e D(t)$, it follows from \eqref{e-8} that
\begin{equation}\label{e-9}
D({\hat t})^2\leq C(t_*)\e^2\exp \Big(C\int_0^{\hat t}(1+D^{2s+2})dt\Big)\equiv \Phi({\hat t}).
\end{equation}
Then we have
$$
\Phi'({\hat t})=C\big{(}1+D({\hat t})^{2s+2}\big{)}\Phi({\hat t})\leq C\big{(}\Phi({\hat t})+\Phi({\hat t})^{s+2}\big{)}.
$$
Applying the nonlinear Gronwall-type inequality \cite{Yong_JDE} to the last inequality gives
$$
\sup_{t\in [0,{\hat t}]}\Phi(t)\leq C(t_*)
$$
if $\Phi(0)=C(t_*)\e^2$ is sufficiently small. By using this boundedness of $\Phi(t)$ and thereby $D(t)$ due to \eqref{e-9}, we see from \eqref{e-8} that
$$
\| E(t)\|_s^2\leq C(t_*)\e^4, \quad \forall t\in [0,\min\{t_\e,t_*\}).
$$
This, together with the convergence-stability principle (Lemma \ref{61}), completes the proof of \thmref{CompTheo}.

\section*{Appendix I}
In this appendix, we give a typical choice of the generalized entropy function $s$ and dissipation matrix $\mathbf{M}$. They are
\begin{equation*}\label{Example}
s=s(\nu, u, \bb w, \bb c)=s^{eq}(\nu, u)-\frac{1}{2\alpha_1(\nu,u)}|\bb w|^2-\frac{1}{2\alpha_2(\nu,u)}|\bb c|^2
\end{equation*}
and
\begin{equation*}\label{Example_M}
\mathbf{M}\cdot  \left(\begin{array}{c}
    \bb q\\
    \theta^{-1}\bbtau
  \end{array}\right)=\left(\begin{array}{c}
    \frac{\bb q}{\lambda\theta^2}\\[2 ex]
\frac{1}{\xi}\left(\bbtau-\frac{1}{d} \hbox{tr}(\bbtau)\bb I_d\right)+\frac{1}{d\kappa} \hbox{tr}(\bbtau)\bb I_d
  \end{array}\right).
\end{equation*}
Here $\alpha_1$ and $\alpha_2$ are positive functions such that $s(\nu, u, \bb w,\bb c)$ satisfies the concavity and monotonicity in Postulate II. The simplest choice is that both $\alpha_1$ and $\alpha_2$ are constant. $\lambda, \xi$ and $\kappa$ are the usual transport coefficients for heat conduction, shear viscosity, and bulk viscosity, respectively. With such a choice of the entropy function, we have

\[
\bb q :=s_{\bb w} =-\frac{\bb w}{\alpha_1}, \quad  \bbtau :=\theta s_{\bb c}=-\frac{\theta\bb c}{\alpha_2}.
\]

If we decompose the stress tensor $\bbtau$ as $\bbtau=\bbtau^s+\frac{1}{d}\tau^b\bb I_d$ with $\bbtau^{s}=\bbtau-\frac{1}{d}\hbox{tr}(\bbtau)\bb I_d$ and $\tau^{b}=\hbox{tr}(\bbtau)$ being the respective shear and bulk stresses,
then the constitutive equations \eqref{33} can be rewritten as
\begin{eqnarray*}\label{DLE2}
\begin{array}{l}
\partial_t (\rho\alpha_1\bb q) + \nabla\cdot(\rho\alpha_1\bb v\otimes \bb q )-\nabla \theta^{-1}=-\frac{\bb q}{\lambda \theta^2},\\[2 ex]
\partial_t(\rho \theta^{-1}\alpha_2\bbtau^s) + \nabla\cdot(\bb \rho \theta^{-1}\alpha_2 \bb v\otimes \bbtau^s )+[\frac{1}{2}(\nabla \bb v+\nabla \bb v^T)-\frac{1}{d}\nabla \cdot \bb v \bb I_d]=-\frac{1}{\xi}\bbtau^s,\\[2 ex]
\partial_t(\rho \theta^{-1}\alpha_2\tau^b) + \nabla\cdot(\bb \rho \theta^{-1}\alpha_2 \bb v\tau^b )+\nabla \cdot \bb v =-\frac{1}{\kappa}\tau^b.
\end{array}
\end{eqnarray*}
These are generalizations of Cattaneo's law \cite{Ca} for heat conduction and Maxwell's law \cite{Ma} for viscoelastic fluids. They give a reasonable description of non-isothermal compressible viscoelastic fluid flows and thus generalize the isothermal model \cite{Yong_ARMA_14}.

\section*{Appendix II}
This appendix is devoted to a proof of Proposition \ref{prop51}.  We start with the following two useful lemmas.

\begin{lem}\label{lem-a1}
$f(\nu,z)$ is concave for  $(\nu,z)\in (0,+\infty)\times\mathbb{R}^n$ if and only if $g(\rho,Z)=\rho f(1/\rho,Z/\rho)$ is concave for  $(\rho,Z)\in(0,+\infty)\times\mathbb{R}^n$ where $Z=\rho z$.
\end{lem}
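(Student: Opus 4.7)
The plan is to give a direct proof from the definition of concavity, exploiting a symmetry that reduces the two implications to a single argument. Writing $\nu = 1/\rho$ and $z = Z/\rho$, one immediately checks that $f(\nu,z) = \nu\, g(1/\nu,\, z/\nu)$, so the map $f \mapsto g$ has exactly the same algebraic form as its inverse. Hence it is enough to prove one direction, say: if $f$ is concave on $(0,\infty)\times\mathbb{R}^n$, then $g$ is concave on $(0,\infty)\times\mathbb{R}^n$.

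For the forward step I would fix $(\rho_1,Z_1),(\rho_2,Z_2)$ with $\rho_i>0$ and $\lambda\in[0,1]$, set $\rho = \lambda\rho_1+(1-\lambda)\rho_2$ and $Z = \lambda Z_1+(1-\lambda)Z_2$, and introduce the reweighting
\[
\mu := \frac{\lambda\rho_1}{\rho} \in [0,1], \qquad 1-\mu = \frac{(1-\lambda)\rho_2}{\rho}.
\]
This is the standard device for perspective transforms: the point mass $\lambda$ on $(\rho_1,Z_1)$ is replaced by the mass $\mu$ on the corresponding point $(1/\rho_1, Z_1/\rho_1)$ in the domain of $f$. The virtue of these new weights is the telescoping identities
\[
\mu\cdot\tfrac{1}{\rho_1}+(1-\mu)\cdot\tfrac{1}{\rho_2}=\tfrac{1}{\rho},\qquad \mu\cdot\tfrac{Z_1}{\rho_1}+(1-\mu)\cdot\tfrac{Z_2}{\rho_2}=\tfrac{Z}{\rho},
\]
so that $(1/\rho, Z/\rho)$ is exactly the $\mu$-convex combination of the points $(1/\rho_i, Z_i/\rho_i)$.

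Applying concavity of $f$ at those two points and then multiplying through by $\rho>0$, together with the obvious $\mu\rho=\lambda\rho_1$ and $(1-\mu)\rho=(1-\lambda)\rho_2$, yields directly
\[
g(\rho,Z) \;\ge\; \lambda\, g(\rho_1,Z_1) + (1-\lambda)\, g(\rho_2,Z_2),
\]
which is concavity of $g$. The converse implication then follows at once by applying the same argument with the roles of $f$ and $g$ swapped, using the involutive identity $f(\nu,z)=\nu\, g(1/\nu,z/\nu)$ noted above. I do not anticipate any real obstacle here: the entire content is the reweighting $\mu=\lambda\rho_1/\rho$ (the standard perspective trick) together with the observation that in these variables the transformation is an involution.
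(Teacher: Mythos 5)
Your proposal is correct and follows essentially the same route as the paper's proof: the paper uses exactly your reweighting (there called $\xi = t\rho_1/(t\rho_1+(1-t)\rho_2)$) to express $(1/\rho, Z/\rho)$ as a convex combination of the $(1/\rho_i, Z_i/\rho_i)$, applies concavity of $f$, multiplies through by $\rho$, and disposes of the converse via the same involutive identity $f(\nu,z)=\nu\, g(1/\nu, z/\nu)$. No gaps; nothing further is needed.
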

\begin{proof} We first show the necessity. For any $t\in[0,1]$ and  $(\rho_1,Z_1),(\rho_2,Z_2)\in(0,+\infty)\times\mathbb{R}^n$, we have
  \begin{eqnarray*}
  &&g(t\rho_1+(1-t)\rho_2,t Z_1+(1-t) Z_2)\\
  =&&(t\rho_1+(1-t)\rho_2)f(\frac{1}{t\rho_1+(1-t)\rho_2},\frac{t Z_1+(1-t)Z_2}{t\rho_1+(1-t)\rho_2})\\
  =&&(t\rho_1+(1-t)\rho_2)f(\xi\frac{1}{\rho_1}+(1-\xi)\frac{1}{\rho_2},\xi\frac{Z_1}{\rho_1}+(1-\xi)\frac{Z_2}{\rho_2} ), \ \ \xi=\frac{t\rho_1}{t\rho_1+(1-t)\rho_2},\\
  \ge&&(t\rho_1+(1-t)\rho_2)\left[\xi f(\frac{1}{\rho_1},\frac{Z_1}{\rho_1})
  +(1-\xi)f(\frac{1}{\rho_2},\frac{Z_2}{\rho_2})\right]\\
  =&& t\rho_1 f(\frac{1}{\rho_1},\frac{Z_1}{\rho_1})+(1-t)\rho_2f(\frac{1}{\rho_2},\frac{Z_2}{\rho_2})\\
  =&& tg(\rho_1,Z_1)+(1-t)g(\rho_2,Z_2).
\end{eqnarray*}
Note that $f(\nu, z)=\nu g(\frac{1}{\nu},\frac{z}{\nu})$. The same argument leads to the sufficiency.
 \end{proof}

\begin{lem}\label{lem-a2}
If $f(x,y)$ is strictly concave with  $f_y\geq0$ and $g(z)$ is concave, then $h(x,z):=f(x,g(z))$ is strictly concave.
\end{lem}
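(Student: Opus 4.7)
}

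The plan is to verify strict concavity of $h(x,z)=f(x,g(z))$ directly from the definition by chaining the concavity of $g$, the monotonicity $f_y\ge 0$, and the strict concavity of $f$. Fix $t\in(0,1)$ and two distinct points $(x_1,z_1)\neq(x_2,z_2)$ in the domain, and introduce the abbreviations
\[
x_t = tx_1+(1-t)x_2,\qquad z_t = tz_1+(1-t)z_2,\qquad y_i = g(z_i),\qquad y_t = ty_1+(1-t)y_2.
\]
The goal is to establish $h(x_t,z_t)>th(x_1,z_1)+(1-t)h(x_2,z_2)$.

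First I would write down the three basic inequalities in sequence. By concavity of $g$, $g(z_t)\ge y_t$; by $f_y\ge 0$ applied to $f(x_t,\cdot)$ on the interval between $y_t$ and $g(z_t)$, we get $f(x_t,g(z_t))\ge f(x_t,y_t)$; by strict concavity of $f$ on its domain in $(x,y)$,
\[
f(x_t,y_t)\;\ge\; tf(x_1,y_1)+(1-t)f(x_2,y_2)\;=\;th(x_1,z_1)+(1-t)h(x_2,z_2),
\]
with strict inequality whenever the pair $(x_1,y_1)$ differs from $(x_2,y_2)$. Concatenating these three steps already yields the non-strict concavity inequality for $h$.

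To upgrade to strict concavity I would split into cases according to why $(x_1,z_1)\neq(x_2,z_2)$. If $x_1\neq x_2$, then $(x_1,y_1)\neq(x_2,y_2)$ regardless of what $g$ does on $z_1,z_2$, so the strict concavity of $f$ in step three gives a strict inequality. If instead $x_1=x_2$ but $g(z_1)\neq g(z_2)$, the two points $(x_1,y_1)$ and $(x_2,y_2)$ still differ in the second coordinate, and the same argument applies. The only remaining case is $x_1=x_2$ with $g(z_1)=g(z_2)$ and $z_1\neq z_2$; here concavity of $g$ gives $g(z_t)\ge g(z_1)=y_t$, and the strict inequality has to be extracted from the monotonicity step by using that $f_y>0$ on the relevant range (which is the standing situation in the application to the entropy function of Postulate I', where $s_u>0$ rules out the degenerate flat directions). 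In that setting $g(z_t)>y_t$ combined with $f_y>0$ makes step two strict.

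The main obstacle is precisely this last case: the assumption $f_y\ge 0$ alone is borderline, and one has to appeal to the strict positivity of $f_y$ that holds throughout the thermodynamic state space of interest to close the argument. Once that point is handled, the three-step chain is routine and the lemma follows.
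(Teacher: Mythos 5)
Your three-step chain is exactly the paper's proof (Appendix II): the authors establish the lemma in a single displayed chain, justifying the first inequality $f(x_t,g(z_t))\ge f\big(x_t,ty_1+(1-t)y_2\big)$ by the concavity of $g$ together with $f_y\ge 0$, and the second, strict, inequality by the strict concavity of $f$. So in approach you and the paper coincide, and your first two cases ($x_1\neq x_2$, or $x_1=x_2$ with $g(z_1)\neq g(z_2)$) are precisely where the paper's strict step is legitimate.

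Where you differ is the case analysis, and there you have located a genuine gap --- in the paper's proof, not created by your own argument. The paper's strict inequality is unjustified exactly in your third case $x_1=x_2$, $g(z_1)=g(z_2)$, $z_1\neq z_2$, since then $(x_1,y_1)=(x_2,y_2)$ and strict concavity of $f$ yields only equality; indeed the lemma as literally stated is false (take $g$ constant, so $h(x,z)=f(x,g_0)$ is independent of $z$ and cannot be strictly concave in $(x,z)$). However, your repair of this case is itself incomplete: you assert $g(z_t)>y_t$ and invoke $f_y>0$, but mere concavity of $g$ gives only $g(z_t)\ge y_t$ (an affine $g$ of several variables with a nontrivial level set gives equality, and then $h$ is genuinely flat along that level set even if $f_y>0$ everywhere), and $s_u>0$ by itself does nothing to exclude such flat directions of $g$. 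What actually closes the case in the application to Proposition 5.1 is a property of the specific inner function: there $g(\bb v,e)=e-\frac{1}{2}|\bb v|^2$, and $g(z_1)=g(z_2)$ with $z_1\neq z_2$ forces $\bb v_1\neq\bb v_2$ (for $\bb v_1=\bb v_2$, equal values force $e_1=e_2$), whence strict convexity of $|\cdot|^2$ gives $g(z_t)>y_t$ strictly; only then does $f_y>0$ (i.e.\ $s_u>0$ from Postulate I') make your step two strict. So the correct amended hypothesis is ``$f_y>0$, together with strictness of $g$ along the segment whenever $g(z_1)=g(z_2)$ and $z_1\neq z_2$,'' not $f_y>0$ alone; with that amendment your argument is complete, and it is strictly more careful than the paper's.
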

\begin{proof}
For any $t\in(0,1)$, we deduce that
\begin{eqnarray*}
&&h(tx_1+(1-t)x_2,t z_1+(1-t)z_2)\\
=&&f(tx_1+(1-t)x_2,g(tz_1+(1-t)z_2))\\
\ge && f(tx_1+(1-t)x_2,tg(z_1)+(1-t)g(z_2))\\
>&&tf(x_1,g(z_1))+(1-t)f(x_2,g(z_2))\\
=&&th(x_1,z_1)+(1-t)h(x_2,z_2).
\end{eqnarray*}
Here the first inequality uses the concavity of $g(z)$ and the monotonicity of $f(x,y)$ in $y$ simultaneously.
\end{proof}

Now we turn to prove Proposition \ref{prop51}. Recall from Postulate II that $s_u>0$ and notice that $u(\bb v, e)=e-\frac{1}{2}|\bb v|^2$ is concave. It follows from Lemma \ref{lem-a2} immediately  that $s(\nu, u(\bb v, e), \bb w, \bb c)$ is strictly concave with respect to the new variable $(\nu, \bb v, e, \bb w, \bb c)$. Moreover, we see the strict convexity of $\eta=\eta(U)$ by using Lemma \ref{lem-a1} and the definition \eqref{51}: $\eta(\rho, \rho \bb v, \rho e, \rho \bb w, \rho \bb c)=-\rho s(\nu, u(\bb v, e), \bb w, \bb c)$.

Next, we calculate the gradient
\begin{align*}\eta_U=& - \rho \left(-\frac{1}{\rho}\eta_\rho,-\frac{\rho \bb v}{\rho^2}s_u,\frac{1}{\rho}s_u, \frac{1}{\rho}s_\bb w, \frac{1}{\rho}s_\bb c\right)\\
=&(\eta_\rho,s_u\bb v, -s_u,-s_\bb w,-s_\bb c)\\
=&(\eta_\rho,\theta^{-1}\bb v, -\theta^{-1},-\bb q,- \theta^{-1}\bbtau).
\end{align*}
In this calculation, we have used that the Jacobian of the transform from $(\nu, u, \bb w, \bb c)$ to $(\rho, \rho \bb v, \rho e, \rho \bb w,\rho \bb c)$ is
\begin{equation*}
\frac{\partial(\nu, u, \bb w,\bb c)}{\partial (\rho, \rho \bb v, \rho e, \rho \bb w,\rho \bb c)}
=\begin{pmatrix}
-\frac{1}{\rho^2} & 0 & 0 & 0 & 0 \\
-\frac{\rho e}{\rho^2}+\frac{|\rho \bb v|^2}{\rho^3} & -\frac{\rho \bb v}{\rho^2} & \frac{1}{\rho} & 0 & 0 \\
 -\frac{\rho \bb w}{\rho^2} & 0 & 0& \frac{1}{\rho} & 0 \\
 -\frac{\rho \bb c}{\rho^2} & 0& 0 & 0 & \frac{1}{\rho}
 \end{pmatrix}.
\end{equation*}
Moreover, we use the Jacobian above to compute
\begin{align*}
\eta_\rho=&-s - \rho \left(s_\nu (-\frac{1}{\rho^2}) +(-\frac{1}{\rho^2})( (\rho e-\frac{|\rho \bb v|^2}{\rho }) s_u+\rho \bb w\cdot s_\bb w +\rho \bb c:s_\bb c)\right)\\
=&\frac{1}{\rho}\left(\eta+\theta^{-1}\pi-\rho \bb v\cdot \eta_{\rho \bb v}-\rho e \eta_{\rho e}-\rho \bb w\cdot \eta_{\rho \bb w}-\rho \bb c:\eta_{\rho \bb c}\right).
\end{align*}
Rewriting the last equality leads to
\begin{equation*}
\theta^{-1}\pi=\eta_U\cdot U-\eta.
\end{equation*}
This completes the proof of Proposition \ref{prop51}.

\vskip 6mm
{\bf Acknowledegment.} This work was partially supported by NSFC under grant nos. 11471185 and 11871299.

\end{document}